\documentclass [12pt]{amsart}
\usepackage{amscd,amssymb,latexsym}
\usepackage{tikz}
\usepackage{fullpage}
\newcommand{\Mdef}[2]{\newcommand{#1}{\relax \ifmmode #2 \else $#2$\fi}}


\newcommand{\sm }{\wedge}

\newcommand{\tensor}{\otimes}


\Mdef{\bhom}{\mathbf{\hat{H}om}}

\Mdef{\Mod}{\mathrm{mod}}

\newcommand{\st}{\; | \;}



\newtheorem{thm}{Theorem}[section]
\newtheorem{lemma}[thm]{Lemma}

\newtheorem{cor}[thm]{Corollary}

\theoremstyle{definition}

\newtheorem{defn}[thm]{Definition}

\newtheorem{remark}[thm]{Remark}

\newcommand{\qqed}{\qed \\[1ex]}
\renewenvironment{proof}[1][\hspace*{-.8ex}]{\noindent {\bf Proof #1:\;}}{\qqed}


\Mdef{\PH} {\Phi^H}
\Mdef{\PK} {\Phi^K}
\Mdef{\PL} {\Phi^L}
\Mdef{\PT} {\Phi^{\T}}

\Mdef{\ef}{E{\cF}_+}
\Mdef{\etf}{\widetilde{E}{\cF}}
\Mdef{\eg}{E{G}_+}
\Mdef{\etg}{\tilde{E}{G}}


\Mdef{\infl}{\mathrm{inf}}
\Mdef{\defl}{\mathrm{def}}
\Mdef{\res}{\mathrm{res}}
\Mdef{\ind}{\mathrm{ind}}
\Mdef{\coind}{\mathrm{coind}}

\Mdef{\univ}{\mathcal{U}}


\Mdef{\Fp}{\mathbb{F}_p}
\Mdef{\Zpinfty}{\Z /p^{\infty}}
\Mdef{\Zpadic}{\Z_p^{\wedge}}


\newcommand{\bi}{\begin{itemize}}
\newcommand{\be}{\begin{enumerate}}
\newcommand{\bc}{\begin{center}}
\newcommand{\bd}{\begin{description}}
\newcommand{\ei}{\end{itemize}}
\newcommand{\ee}{\end{enumerate}}
\newcommand{\ec}{\end{center}}
\newcommand{\ed}{\end{description}}

%
%

%
%

%
%


\newcommand{\lra}{\longrightarrow}
\newcommand{\lla}{\longleftarrow}


\Mdef{\we}{\mathbf{we}}
\Mdef{\fib}{\mathbf{fib}}
\Mdef{\cof}{\mathbf{cof}}
\Mdef{\BI}{\mathcal{BI}}



\Mdef{\B}{\mathbb{B}}
\Mdef{\C}{\mathbb{C}}
\Mdef{\D}{\mathbb{D}}
\Mdef{\E}{\mathbb{E}}
\Mdef{\T}{\mathbb{T}}
\Mdef{\F}{\mathbb{F}}
\Mdef{\G}{\mathbb{G}}
\Mdef{\I}{\mathbb{I}}
\Mdef{\N}{\mathbb{N}}
\Mdef{\Q}{\mathbb{Q}}
\Mdef{\R}{\mathbb{R}}
\Mdef{\bbS}{\mathbb{S}}
\Mdef{\Z}{\mathbb{Z}}

\Mdef{\bA}{\mathbb{A}}
\Mdef{\bB}{\mathbb{B}}
\Mdef{\bC}{\mathbb{C}}
\Mdef{\bD}{\mathbb{D}}
\Mdef{\bE}{\mathbb{E}}
\Mdef{\bF}{\mathbb{F}}
\Mdef{\bG}{\mathbb{G}}
\Mdef{\bH}{\mathbb{H}}
\Mdef{\bI}{\mathbb{I}}
\Mdef{\bJ}{\mathbb{J}}
\Mdef{\bK}{\mathbb{K}}
\Mdef{\bL}{\mathbb{L}}
\Mdef{\bM}{\mathbb{M}}
\Mdef{\bN}{\mathbb{N}}
\Mdef{\bO}{\mathbb{O}}
\Mdef{\bP}{\mathbb{P}}
\Mdef{\bQ}{\mathbb{Q}}
\Mdef{\bR}{\mathbb{R}}
\Mdef{\bS}{\mathbb{S}}
\Mdef{\bT}{\mathbb{T}}
\Mdef{\bU}{\mathbb{U}}
\Mdef{\bV}{\mathbb{V}}
\Mdef{\bW}{\mathbb{W}}
\Mdef{\bX}{\mathbb{X}}
\Mdef{\bY}{\mathbb{Y}}
\Mdef{\bZ}{\mathbb{Z}}

\Mdef{\cA}{\mathcal{A}}
\Mdef{\cB}{\mathcal{B}}
\Mdef{\cC}{\mathcal{C}}
\Mdef{\mcD}{\mathcal{D}} 
\Mdef{\cE}{\mathcal{E}}
\Mdef{\cF}{\mathcal{F}}
\Mdef{\cG}{\mathcal{G}}
\Mdef{\mcH}{\mathcal{H}} 
\Mdef{\cI}{\mathcal{I}}
\Mdef{\cJ}{\mathcal{J}}
\Mdef{\cK}{\mathcal{K}}
\Mdef{\mcL}{\mathcal{L}}

\Mdef{\cM}{\mathcal{M}}
\Mdef{\cN}{\mathcal{N}}
\Mdef{\cO}{\mathcal{O}}
\Mdef{\cP}{\mathcal{P}}
\Mdef{\cQ}{\mathcal{Q}}
\Mdef{\mcR}{\mathcal{R}}
\Mdef{\cS}{\mathcal{S}}
\Mdef{\cT}{\mathcal{T}}
\Mdef{\cU}{\mathcal{U}}
\Mdef{\cV}{\mathcal{V}}
\Mdef{\cW}{\mathcal{W}}
\Mdef{\cX}{\mathcal{X}}
\Mdef{\cY}{\mathcal{Y}}
\Mdef{\cZ}{\mathcal{Z}}

\Mdef{\ca}{\mathcal{a}}
\Mdef{\ct}{\mathcal{t}}

\Mdef{\At}{\tilde{A}}
\Mdef{\Bt}{\tilde{B}}
\Mdef{\Ct}{\tilde{C}}
\Mdef{\Et}{\tilde{E}}
\Mdef{\Ht}{\tilde{H}}
\Mdef{\Kt}{\tilde{K}}
\Mdef{\Lt}{\tilde{L}}
\Mdef{\Mt}{\tilde{M}}
\Mdef{\Nt}{\tilde{N}}
\Mdef{\Pt}{\tilde{P}}


\Mdef{\tA}{\tilde{A}}
\Mdef{\tB}{\tilde{B}}
\Mdef{\tC}{\tilde{C}}
\Mdef{\tE}{\tilde{E}}
\Mdef{\tH}{\tilde{H}}
\Mdef{\tK}{\tilde{K}}
\Mdef{\tL}{\tilde{L}}
\Mdef{\tM}{\tilde{M}}
\Mdef{\tN}{\tilde{N}}
\Mdef{\tP}{\tilde{P}}

\Mdef{\ft}{\tilde{f}}
\Mdef{\xt}{\tilde{x}}
\Mdef{\yt}{\tilde{y}}

\Mdef{\Ab}{\overline{A}}
\Mdef{\Bb}{\overline{B}}
\Mdef{\Cb}{\overline{C}}
\Mdef{\Db}{\overline{D}}
\Mdef{\Eb}{\overline{E}}
\Mdef{\Fb}{\overline{F}}
\Mdef{\Gb}{\overline{G}}
\Mdef{\Hb}{\overline{H}}
\Mdef{\Ib}{\overline{I}}
\Mdef{\Jb}{\overline{J}}
\Mdef{\Kb}{\overline{K}}
\Mdef{\Lb}{\overline{L}}
\Mdef{\Mb}{\overline{M}}
\Mdef{\Nb}{\overline{N}}
\Mdef{\Ob}{\overline{O}}
\Mdef{\Pb}{\overline{P}}
\Mdef{\Qb}{\overline{Q}}
\Mdef{\Rb}{\overline{R}}
\Mdef{\Sb}{\overline{S}}
\Mdef{\Tb}{\overline{T}}
\Mdef{\Ub}{\overline{U}}
\Mdef{\Vb}{\overline{V}}
\Mdef{\Wb}{\overline{W}}
\Mdef{\Xb}{\overline{X}}
\Mdef{\Yb}{\overline{Y}}
\Mdef{\Zb}{\overline{Z}}

\Mdef{\db}{\overline{d}}
\Mdef{\hb}{\overline{h}}
\Mdef{\qb}{\overline{q}}
\Mdef{\rb}{\overline{r}}
\Mdef{\tb}{\overline{t}}
\Mdef{\ub}{\overline{u}}
\Mdef{\vb}{\overline{v}}

\Mdef{\hc}{\hat{c}}
\Mdef{\he}{\hat{e}}
\Mdef{\hf}{\hat{f}}
\Mdef{\hA}{\hat{A}}
\Mdef{\hH}{\hat{H}}
\Mdef{\hJ}{\hat{J}}
\Mdef{\hM}{\hat{M}}
\Mdef{\hP}{\hat{P}}
\Mdef{\hQ}{\hat{Q}}

\Mdef{\thetab}{\overline{\theta}}
\Mdef{\phib}{\overline{\phi}}

\Mdef{\uA}{\underline{A}}
\Mdef{\uB}{\underline{B}}
\Mdef{\uC}{\underline{C}}
\Mdef{\uD}{\underline{D}}

\Mdef{\bolda}{\mathbf{a}}
\Mdef{\boldb}{\mathbf{b}}
\Mdef{\bfD}{\mathbf{D}}


\Mdef{\fm}{\frak{m}}
\Mdef{\fp}{\frak{p}}


\Mdef{\eps}{\epsilon}

\input{xypic}

\setcounter{tocdepth}{1}
\begin{document}
\title{Four approaches to cohomology theories with Reality}

\author{J.P.C.Greenlees}
\address{School of Mathematics and Statistics, Hicks Building, 
Sheffield S3 7RH. UK.}
\email{j.greenlees@sheffield.ac.uk}
\date{}

\begin{abstract}
We give an account of the calculations of the
$RO(Q)$-graded coefficient rings of some of the most basic
$Q$-equivariant cohomology theories, where  $Q$ is a group of order 2. One
 purpose is to advertise the effectiveness of the Tate square,
showing it has advantages over the slice spectral sequences in algebraically
simple cases. A second purpose is to give a single account showing
how to translate between the languages of different approaches. 
\end{abstract}

\thanks{I am grateful to R.R.Bruner, J.P.May
  and  L. Meier for collaborations which fed into this account, to the referee for objectivity
  about conventions, and to D.Dugger, D.C.Ravenel and Mingcong Zeng for pointing
  out an error in the  old version of the chart in Section 2.}
\maketitle

\tableofcontents
\newcommand{\Zu}{\underline{\Z}}
\newcommand{\rost}{\bigstar}
\newcommand{\Ftwo}{\mathbb{F}_2}
\newcommand{\kR}{k\R}
\newcommand{\vbh}{\hat{\vb}}

\section{Introduction}
Historically, complex orientable cohomology theories (complex
$K$-theory $KU$, complex cobordism $MU$, ...)  have been the
first to be exploited, partly because they tend to be easier to
calculate. Real analogues of these (real $K$-theory $KO$, real
cobordism $MO$, ... ) contain new information but can be hard to approach 
directly.  It is well known that representations  over the reals can be
viewed as complex representations with an additional conjugate-linear
involution: real representations are complex representations with an
action of the Galois group $Q=Gal(\C|\R)$ of order 2. 
 Atiyah used this technique \cite{Atiyah} to define  $K$-theory `with
 Reality' (or simply {\em Real} $K$-theory with the initial letter
 capitalized),  thereby obtaining a very practical approach to real
 $K$-theory by descent.  This was then extended to cobordism by Landweber
\cite{L}, Araki \cite{Araki} and others.  The resulting theories are
$Q$-equivariant cohomology theories represented by $Q$-spectra. 

The same discussion applies if we start with  $Q$-equivariant
theories:  the complex orientable theories (Atiyah-Segal
equivariant  $K$-theory, tom Dieck complex bordism) are much more
accessible. Many techniques were developed in the 1990s with
applications to equivariant complex orientable theories most clearly in mind. 

The point of this article is to describe some well known calculations for 
Real  equivariant theories in the simplest cases (ordinary cohomology
with constant integer coefficients and Real connective $K$-theory),
giving several different methods 
in each case. In particular, despite the fact that the $Q$-equivariant  Real theories are
not complex orientable (even though the underlying non-equivariant
theories are),  some of the complex orientable methods are still
extremely effective.

\subsection{Notation}
This subsection introduces our notation for some standard
machinery. I am grateful to the referee for patiently and objectively
pointing out ways in which  notational conventions vary.  Encouraged
by  the referee to clarify my conventions, the length of the subsection has 
increased about fivefold: I hope this will make the article more 
accessible, and ease communication between different notational
tribes. On the other hand, those familiar with the customs of my tribe 
can skip directly to Subsection \ref{subsec:Reality} now. 

\subsubsection{Grading} We write $\sigma$ for the sign representation
of $Q$ on $\R$, 1 for the trivial representation, and 
$\rho=1+\sigma$ for the real regular representation. We will be
grading our groups over the real
representation ring $RO(Q)=\{ x+y\sigma \st x,y\in \Z\}$.  We write
$M_*$ to indicate grading over $\Z$ and $M_{\rost}$ to denote grading
over  $RO(Q)$.  When we draw
pictures, they will be displayed in the plane with
the $\Z$ axis horizontal and the $\sigma$ axis vertical, so that
$x+y\sigma$ gets displayed at the point with cartesian coordinates
$(x,y)$. 

{\em Alternatives:}
We have followed \cite{HHR} in using $\sigma$ for the non-trivial real
representation of $Q$. In the past each author seems to have used a different
letter. For example, the present author previously used $\xi$  \cite{assiet, Tate, root, kobg}, 
the letter $\alpha$ is used by Hu and Kriz, and the letter $\tau$ is
used elsewhere. Perhaps we can unite around $\sigma$?

We have used  $RO(Q)$ grading both in notation and displays.
Atiyah \cite{Atiyah},  Landweber \cite{L}, Araki \cite{Araki} 
use $\R^{p,q}$ for $q\oplus p\sigma$ (sic). 
The Tate twist notation from algebraic geometry used by Dugger \cite{Dugger} is related to
$RO(Q)$-grading by  $\R^{p+q,q}=p+q\sigma. $

\subsubsection{Mackey functors} 
We will use Dress's formulation of Mackey functors for a finite group
$G$  \cite{Dress} as
given by a covariant and a contravariant functor on finite $G$-sets
subject  to the Mackey condition.  A Mackey functor $M$ for a group $G$ is 
therefore determined by its values on the transitive $G$-sets $G/H$
together with certain structure maps. 
If $K \subseteq H$ the projection $\pi_K^H: G/K\lra G/H$ induces a 
restriction map $(\pi_K^H)^*=\res^H_K: M(G/H)\lra M(G/K)$ and an
induction map $(\pi_K^H)_*=\ind_K^H: M(G/K)\lra M(G/H)$, and right
multiplication $R_g: G/H\lra G/H^g$ induces an action of
$W_G(H)=N_G(H)/H$ on $M(G/H)$. 

We write $\Zu$ for  the constant Mackey functor at $\Z$ (which is to
say that the values on all orbits are $\Z$,  restriction is the identity
and  induction is multiplication by the index of the smaller subgroup in
the larger one).

\subsubsection{Spectra} 
We will be working with  cohomology theories $E_G^*(\cdot)$, which are
represented by {\em genuine} $G$-spectra $E$ (i.e., $G$-spectra indexed on a
complete $G$-universe) in the sense that
$E_G^*(X)=[X,E]_G^*$. Accordingly the integer grading can be extended to an
$RO(G)$-grading. 

For an orthogonal  representation $V$ we write $S(V)$ for the unit
sphere and $S^V$ for the one point compactification of $V$ with $\infty$ as the basepoint.
We write $X^G_V:=\pi_V^G(X)=[S^V, X]^G$ for the $V$th $G$-equivariant homotopy 
group of $X$.
 As usual $X_V:=\pi_V(X)=[S^V,X]$ denotes the $V$th
non-equivariant homotopy group. The notation extends to virtual
representations.

The subcategory of $G$-spectra of the form $G/H_+$ is called the
{\em stable orbit category}, and calculation of the maps between them
shows that  they are generated by the maps $(\pi_K^H)_+$ and their duals,
and that a Mackey functor as described above is equivalent to 
a contravariant functor on the stable orbit category.
We write $\underline{\pi}^G_*(X)$ for the Mackey functor 
$[(\cdot)_+, X]^G_*$ whose value
on $G/H$ is $\pi_*^H(X)=[G/H_+,X]^G_*$. 

{\em Alternatives:}
The following is inserted in recognition of the fact that alternative
views are held very strongly by some (for example by the referee!). 

Some authors  (including the present author in the 1980s) 
prefer to omit the group when writing $RO(Q)$-grading and 
Mackey functors (writing $\underline{\pi}_*(X)$ where we write
$\underline{\pi}^Q_*(X)$  and $\pi_{\rost}(X)$ where we write
$\pi_{\rost}^Q(X)$). This is presumably on the grounds of brevity, but
it is at the cost of  forcing the adoption of a more complicated notation for non-equivariant
homotopy. The present choice  is based on  the ideas  (i) that working
equivariantly requires explicit acknowledgement whilst working
non-equivariantly does not and (ii) that one should be able
to substitute a particular grading for $\rost$ (so that considering
grading zero for instance, $\pi^Q_{\rost}(X)$ becomes $\pi^Q_0(X)$). 

\subsubsection{Ordinary cohomology} A cohomology theory $E_G^*(\cdot)$ satisfying the
dimension axiom (i.e., one whose values on cells $G/H_+$ is entirely in
degree 0) is called  {\em ordinary} (or {\em Bredon}). It is determined by the
values on these cells, which define a Mackey functor
$M=\underline{\pi}^G_0(E)$. We write $H^*_G(\cdot ; M)$ for the
cohomology theory and $HM$ for the representing $G$-spectrum. 

\subsubsection{Proto-Euler and Proto-Thom classes} 
For any ring spectrum  $E$,  the unit map provides, for any 
representation $V$ a class $\Sigma^V\iota \in E^{V}_G(S^V)$. A Thom class would be an
element $\tau_V \in E^{|V|}_G(S^V)$ which generates a rank one free
module. The Euler class (of degree $|V|-V$) would be the pullback of $\tau_V$
along the zero section $a_V: S^0\lra S^V$. 

With $G=Q$ some related classes  are important for
us. We write   $a=a_{\sigma}: S^0\lra S^\sigma$ for the inclusion, and think of it as  a homotopy
element of degree $-\sigma$. 

There are Euler-like classes $\lambda$ (of degree $1-\sigma$, which
does not survive to homotopy),  $u$ (of degree $2(1-\sigma)$, important for
ordinary cohomology), and  $U$ (of degree $4(1-\sigma)$, important for
$k\R$). 

{\em Alternatives:}
We follow \cite{HHR} in using $a$ for the degree $-\sigma$
Euler-like class. The letters $e$ and $i$ are used by some authors.

In \cite[Definition 3.4]{HHRCfour} the class $u$ is called
$u_{2\sigma}$, and the class $2u^{-1}$ is called $e_{2\sigma}$. 
In the more general study of bordism with reality, a class $u$ (of
which ours is the image) plays a significant role, and its $2^n$th
power is an almost-periodicity at the $n$th chromatic level. This
means that $U=u^2$, and that we should think of $\lambda$ as a 
square root of $u$.

\subsubsection{Basepoints} 
Our $Q$ spaces will be equipped with a $Q$-fixed basepoint and cohomology will be reduced.

\subsection{What's the point of this paper?}
The paper describes several different methods of calculating
the $RO(Q)$-graded homotopy of two well known Real spectra: the integral
Eilenberg-MacLane spectrum $H\Zu$ and connective $K$-theory with
reality $\kR$. The results and the methods are all known. The intention is
to describe several different methods in a way that lets one compare
them, to provide more details and more pictures than is usually
done, and to record the notations used by different authors. 
The author has found the need to make these comparisons quite
often, and that it takes time to pin details down precisely from the
existing literature. It is hoped the present summary may be useful to others. 

\subsection{Geography of the coefficients}
\label{subsec:Reality}

It is worth highlighting some features of the coefficient
rings $X^Q_{\rost}$. The background is that their non-equivariant
counterparts are in even degrees. Next the reader should look at
the pictures at the  the start of Sections \ref{sec:HZ}
and \ref{sec:kR}) (or at the picture of $tmf_1(3)^Q_{\rost}$ in
\cite[Subsection 13.C]{BPRn}). 

The crudest feature is that the non-zero entries are mostly above the
diagonal line of slope $-1$. Indeed the only exceptions to this are
the  $a$-multiple tails in the right half-plane. This is
characteristic of theories which are equivariantly connective. 

\begin{lemma}
If $X$ is a connective $Q$-spectrum then $X^Q_{x+y\sigma}$ is zero below
the antidiagonal $x+y=0$ in the half-plane $x<0$, and multiplication by $a$
is an isomorphism below the antidiagonal in the half-plane $x\geq 0$.
\end{lemma}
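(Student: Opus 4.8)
The plan is to run the isotropy-separation sequence for $Q$ and read off both assertions from it. Smashing the cofibre sequence $EQ_+\to S^0\to\widetilde{EQ}$ with $X$ gives a cofibre sequence $EQ_+\wedge X\to X\to\widetilde{EQ}\wedge X$, and the statement reduces to two standard connectivity facts: that homotopy orbits of a connective spectrum are connective, and that geometric fixed points preserve connectivity.

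First I would pin down the free part. Taking $EQ=S(\infty\sigma)$, the spectrum $EQ_+\wedge X$ is built from free cells $Q/e_+\wedge S^n$ with $n\ge 0$, and the Wirthm\"uller isomorphism together with $\mathrm{res}^Q_e S^\sigma=S^1$ gives $\pi^Q_{x+y\sigma}(Q/e_+\wedge S^n\wedge X)\cong\pi_{x+y-n}(\mathrm{res}^Q_e X)$. Since $X$ is connective its underlying spectrum is connective, so these groups vanish for $x+y<n$, in particular for $x+y<0$; running up the cell filtration yields $\pi^Q_{x+y\sigma}(EQ_+\wedge X)=0$ whenever $x+y<0$. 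Feeding this into the long exact sequence of the cofibre sequence --- and observing that the connecting map lands in $\pi^Q_{(x-1)+y\sigma}(EQ_+\wedge X)$, whose underlying degree $x+y-1$ is again negative --- shows that $\pi^Q_{x+y\sigma}(X)\to\pi^Q_{x+y\sigma}(\widetilde{EQ}\wedge X)$ is an isomorphism for every $(x,y)$ with $x+y<0$.

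Next I would analyse the right-hand term. Because $\widetilde{EQ}$ is the colimit of the $S^{n\sigma}$ along the maps $a$, smashing with $\widetilde{EQ}$ turns $a\wedge 1$ into an equivalence, so multiplication by $a$ acts invertibly on $\pi^Q_\star(\widetilde{EQ}\wedge X)$, and moreover $\pi^Q_{x+y\sigma}(\widetilde{EQ}\wedge X)\cong\pi^Q_x(\widetilde{EQ}\wedge X)=\pi_x(\Phi^Q X)$. The spectrum $\Phi^Q X$ is connective: $\Phi^Q$ preserves homotopy colimits and sends the generating cells $Q/Q_+\wedge S^n$ and $Q/e_+\wedge S^n$ (for $n\ge 0$) to $\Sigma^n\mathbb{S}$ and to the zero spectrum respectively, so it carries the connective $Q$-spectrum $X$ to a connective spectrum. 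Hence $\pi_x(\Phi^Q X)=0$ for $x<0$, and combining this with the isomorphism of the previous paragraph proves the first assertion: $X^Q_{x+y\sigma}=0$ when $x<0$ and $x+y<0$.

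For the remaining region $x\ge 0$, $x+y<0$, I would use that the isomorphism $\pi^Q_{x+y\sigma}(X)\to\pi^Q_{x+y\sigma}(\widetilde{EQ}\wedge X)$ is induced by the map of spectra $X\to\widetilde{EQ}\wedge X$ and so commutes with multiplication by $a\in\pi^Q_{-\sigma}(\mathbb{S})$. Both $(x,y)$ and $(x,y-1)$ lie in this region, and $a$ acts invertibly on the $\widetilde{EQ}\wedge X$-groups, so the commuting square identifying the $X$-groups with the $\widetilde{EQ}\wedge X$-groups forces $a\colon X^Q_{x+y\sigma}\to X^Q_{x+(y-1)\sigma}$ to be an isomorphism, which is the second assertion. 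I do not expect a serious obstacle: the two connectivity inputs are routine, and the only thing one must watch is the bookkeeping of representation-graded degrees in the long exact sequence --- in particular that the relevant connecting maps also land in the vanishing range.
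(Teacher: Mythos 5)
Your argument is correct, but it takes a genuinely different route from the paper's, which is considerably more economical. The paper works one cell at a time: smashing the single cofibre sequence $Q_+\to S^0\to S^{\sigma}$ with $S^{x+(y-1)\sigma}$ and mapping into $X$ gives the exact sequence
$$X_{x+y}\to X^Q_{x+y\sigma}\stackrel{a}\longrightarrow X^Q_{x+(y-1)\sigma}\to X_{x+y-1},$$
so \emph{non-equivariant} connectivity alone already makes multiplication by $a$ surjective from the antidiagonal and an isomorphism below it, for every $x$; the vanishing in the half-plane $x<0$ then follows by propagating the zeros $X^Q_x=0$ for $x<0$ (this is where equivariant connectivity enters) up each column along these $a$-isomorphisms. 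Your proof is essentially the same idea pushed to the top of the filtration: the isotropy separation sequence $EQ_+\wedge X\to X\to \tilde{E}Q\wedge X$ is the colimit of the sequences $S(n\sigma)_+\wedge X\to X\to S^{n\sigma}\wedge X$, and your identification of the entire region below the antidiagonal with $\pi_*(X^{\Phi Q})$ made $a$-periodic is in fact a sharper conclusion than the lemma asks for. The price is that you need two further (standard, but not free) inputs: the Wirthm\"uller isomorphism to compute the homotopy of the free part, and the fact that $\Phi^Q$ preserves connectivity, which itself rests on realizing a connective $Q$-spectrum by cells of non-negative dimension. The paper's two-sentence argument needs only the vanishing of $X_n$ and $X^Q_n$ for $n<0$. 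Both are sound; yours buys the extra structural statement about geometric fixed points, the paper's buys brevity and minimal hypotheses.
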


\begin{proof}
Non-equivariant connectivity and the cofibre sequence $Q_+\lra S^0\lra S^{\sigma}$ shows multiplication
by $a$ is surjective from the antidiagonal, and isomorphic below it. The
connectivity of $X^Q_*$ shows there are zeros along the negative $x$
axis. 
\end{proof}

However, the most striking feature is that the coefficients
of the theories described here have `The Gap': 
$$X^Q_{*\rho-i}=0 \mbox{  for } i =1, 2, 3 $$
so that  in the display of the $RO(Q)$-graded coefficients, the three
diagonals above the leading diagonal are all zero. It is worth
commenting on the implications of this. 

\begin{lemma}
(i) The vanishing of the diagonals $X^Q_{*\rho-i}$  with $i=1,2$ is equivalent to $X$  
being  {\em strongly even} in the sense of Meier
and Hill \cite{HM} (i.e.,  that  
$X^Q_{*\rho-1}=0$ and the restriction maps $X^Q_{k\rho}\lra X_{2k}$
are isomorphic for all $k$). 

(ii)  The vanishing of the three diagonals $X^Q_{*\rho-i}$ 
with $i=1, 2,3$ is equivalent to being strongly even and having
$X_*$ even. 
\end{lemma}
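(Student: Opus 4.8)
The plan is to derive both parts from the single cofibre sequence $Q_+\lra S^0\stacklra{a}S^\sigma$ already used for the connectivity lemma above. Smashing it with $X$, applying $\pi^Q_{\rost}(-)$, and identifying $\pi^Q_\beta(Q_+\wedge X)\iso\pi_{|\beta|}(X)=X_{|\beta|}$ (via the untwisting $Q_+\wedge X\simeq Q_+\wedge\res^Q_e X$ and the Wirthm\"uller isomorphism; here $|x+y\sigma|:=x+y$ is the underlying dimension), one obtains for every $\beta\in RO(Q)$ a long exact sequence
$$\cdots\lra X_{|\beta|}\stacklra{\ind}X^Q_\beta\stacklra{\cdot a}X^Q_{\beta-\sigma}\stacklra{\res}X_{|\beta|-1}\stacklra{\ind}X^Q_{\beta-1}\lra\cdots ,$$
in which, by the standard translation of this cofibre sequence into the language of induction and restriction, the unlabelled connecting maps are the transfer $\ind=\ind^Q_e$ and (up to a unit in the Burnside ring) the restriction $\res=\res^Q_e$; the degree shapes $X_{|\beta|}\to X^Q_\beta$ and $X^Q_{\beta-\sigma}\to X_{|\beta-\sigma|}$ already force this.

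For (i) I would evaluate this sequence at $\beta=m\rho+\sigma$, so that $\beta=(m+1)\rho-1$, $\beta-\sigma=m\rho$, $\beta-1=(m+1)\rho-2$, $\beta-1-\sigma=m\rho-1$ and $|\beta|=2m+1$. The relevant stretch is then
$$X^Q_{(m+1)\rho-1}\stacklra{\cdot a}X^Q_{m\rho}\stacklra{\res}X_{2m}\stacklra{\ind}X^Q_{(m+1)\rho-2}\stacklra{\cdot a}X^Q_{m\rho-1}.$$
If the diagonal $X^Q_{*\rho-1}$ vanishes, the two outer groups die and this collapses to the short exact sequence $0\lra X^Q_{m\rho}\stacklra{\res}X_{2m}\stacklra{\ind}X^Q_{(m+1)\rho-2}\lra 0$; in particular $\res$ is automatically injective and $X^Q_{(m+1)\rho-2}\iso\cok(\res\colon X^Q_{m\rho}\lra X_{2m})$. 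Read in both directions: if $X$ is strongly even then $\res$ is an isomorphism, whence $X^Q_{*\rho-2}=0$; conversely, if $X^Q_{*\rho-1}=X^Q_{*\rho-2}=0$ then $\res$ has vanishing cokernel and, being injective, is an isomorphism, so $X$ is strongly even. This proves (i).

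For (ii), by (i) it suffices to prove that a strongly even $X$ --- equivalently one with $X^Q_{*\rho-1}=X^Q_{*\rho-2}=0$ --- has $X^Q_{*\rho-3}=0$ if and only if $X_*$ is even. Evaluating the same long exact sequence at $\beta=k\rho-3$, where $|\beta|=2k-3$, $\beta-\sigma=(k-1)\rho-2$, and $\beta+1-\sigma=(k-1)\rho-1$, the group $X^Q_{k\rho-3}$ sits in
$$X^Q_{(k-1)\rho-1}\stacklra{\res}X_{2k-3}\stacklra{\ind}X^Q_{k\rho-3}\stacklra{\cdot a}X^Q_{(k-1)\rho-2},$$
whose two outer groups vanish by strong evenness; hence the transfer $\ind\colon X_{2k-3}\lra X^Q_{k\rho-3}$ is an isomorphism for every $k$. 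Therefore $X^Q_{*\rho-3}=0$ throughout the display if and only if $X_n=0$ for every odd $n$, that is, if and only if $X_*$ is concentrated in even degrees; together with (i) this says exactly that $X$ is strongly even with $X_*$ even, which completes (ii). The one genuine obstacle is the identification of the connecting maps of the cofibre sequence with transfer and restriction in the first paragraph; everything after that is bookkeeping with $RO(Q)$-degrees (rewriting $m\rho+\sigma$, $k\rho-3$ and their $\pm\sigma,\pm1$ neighbours in the form $\ell\rho-i$), and for (ii) one does not even need to recognise the map as $\res$, only exactness.
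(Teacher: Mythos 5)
Your argument is correct and is exactly the paper's intended proof: the paper's one-line proof says to use the long exact sequence induced by $Q_+\lra S^0\lra S^{\sigma}$, and you have simply carried out the degree bookkeeping and the (standard) identification of the maps with transfer, multiplication by $a$, and restriction. Nothing is missing.
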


\begin{proof}
Consider the long exact sequence in $X$-cohomology induced by  the sequence  $Q_+\lra S^0\lra S^{\sigma}$. 
\end{proof}

\subsection{The Tate square}
We will make considerable use of the Tate square, so we briefly
describe the construction from \cite{GTate, Tate}. This is based on the 
contractible free $Q$-space $EQ$ and the join $\tilde{E}Q$ which are related
by the cofibre sequence
$$EQ_+\lra S^0\lra \tilde{E}Q$$
brought to prominence by Carlsson's use of it in his proof of the
Segal conjecture \cite{Carlsson}. 
Since $Q$ is a periodic group we use the model $EQ=S(\infty\sigma)$ for a contractible free $Q$-space and
$\tilde{E}Q=S^{\infty\sigma}$ for the join with $S^0$ to give periodic
resolutions as in \cite{GTate}.  

We will use Scandinavian notation: 
$$X^{hQ}=(X^h)^Q, \mbox{ where } X^h=F(EQ_+, X)$$
$$X_{hQ}\simeq  (X_h)^Q, \mbox{ where } X_h=EQ_+\sm X$$
$$X^{tQ}=(X^t)^Q, \mbox{ where } X^t=F(EQ_+, X)\sm \tilde{E}Q$$
$$X^{\Phi Q}=(X^\Phi)^Q, \mbox{ where } X^\Phi =X\sm\tilde{E}Q. $$
The notation $X^{\Phi Q}$ is justified by the fact that since $Q$ is
of prime order, it is the geometric $Q$-fixed point spectrum. 

It is easy to see that there are two linked cofibre sequences
$$\diagram
X_h\rto \dto_{\simeq} & X \rto \dto &X^{\Phi}\dto \\
X_h \rto &X^h\rto &X^t
\enddiagram$$
and the indicated equivalence arises since $X\lra X^h$ is a
non-equivariant equivalence. Accordingly the Tate square (the square
on the right) is a homotopy pullback, and it is a homotopy pullback of rings if $X$ is a
ring.

Typically we proceed as follows
\begin{itemize}
\item Calculate $X^{hQ}_{\rost}$ using the homotopy fixed point
  spectral sequence described in Subsection \ref{subsec:ROQhfpss}.
\item Infer $X^{tQ}_{\rost}$ by inverting $a$ (since
  $\tilde{E}Q=S^{\infty\sigma}$,  and evidently $\pi^Q_{\rost}(Y\sm S^{\infty
    \sigma})=\pi^Q_{\rost}(Y)[1/a]$).
\item Infer $X_{hQ}^{\rost}$ from the lower cofibre sequence (which
  amounts to a simple Local Cohomology Theorem).
\item Infer $X^{\Phi Q}_{\rost}$ from $X^{tQ}_{\rost}$ using Lemma
  \ref{lem:QcoverPhiQcover}; since both are $a$-periodic this follows
  from the integer graded homotopy groups. 
\item Infer $X^Q_{\rost}$ from the Tate square and the top sequence. 
\end{itemize}

The following 
 is immediate since the fibres of the two verticals in
the Tate square are equivalent. The hypothesis on the non-equivariant
spectrum shows $X_{hQ}$ is connective and hence it follows that $X^{\Phi Q}$ is connective.
\begin{lemma}
\label{lem:QcoverPhiQcover} (\cite[Lemma 11.2]{BPRn})
Suppose $X$ is a $Q$-spectrum which is non-equivariantly connective. 
If  $X^Q\lra X^{hQ}$ is the connective
cover then 
$X^{\Phi Q }\lra X^{tQ}$ is too. \qqed
\end{lemma}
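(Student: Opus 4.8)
The plan is to work directly with the two linked cofibre sequences that define the Tate square, since the statement is really just a diagram chase about connective covers. Recall from the excerpt that we have the comparison of cofibre sequences
$$\diagram
X_h\rto \dto_{\simeq} & X \rto \dto &X^{\Phi}\dto \\
X_h \rto &X^h\rto &X^t
\enddiagram$$
Applying $(-)^Q$ (i.e. taking the genuine $Q$-fixed points, which is exact on cofibre sequences) and rotating, the fibre of $X^Q\lra X^{hQ}$ is $\Sigma^{-1}(X^{\Phi Q}/X_{hQ})$ up to the identification of the two copies of $X_{hQ}$; more precisely the map of cofibre sequences shows that the cofibre of $X_{hQ}\lra X^Q$ maps to the cofibre of $X_{hQ}\lra X^{hQ}$, and this induced map is exactly $X^{\Phi Q}\lra X^{tQ}$. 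Hence the fibre of $X^Q\lra X^{hQ}$ and the fibre of $X^{\Phi Q}\lra X^{tQ}$ agree — this is the ``fibres of the two verticals in the Tate square are equivalent'' remark, which I would simply restate and use.

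Next I would use the connectivity hypothesis. Since $X$ is non-equivariantly connective and $X_h = EQ_+\sm X$ has underlying spectrum $X$ with a free $Q$-action, $X_{hQ}$ is the homotopy orbit spectrum of a connective spectrum and is therefore connective; this is asserted in the paragraph preceding the lemma. Now the hypothesis that $X^Q\lra X^{hQ}$ is the connective cover means precisely that its fibre $F$ has $\pi^Q_i(F)=0$ for $i\geq 0$ and that $\pi^Q_i(X^{hQ})=0$ for $i<0$ — equivalently $F$ is coconnective with $\pi^Q_{-1}(F)$ possibly nonzero but nothing in nonnegative degrees, and $X^Q$ is connective. Wait: more carefully, ``connective cover'' means $X^Q$ is connective and $\pi^Q_i$ of the map is an isomorphism for $i\geq 0$; so the fibre $F$ satisfies $\pi^Q_i(F)=0$ for all $i\geq 0$, and also $\pi^Q_i(F)=0$ for all $i$ once we also know $\pi^Q_{<0}(X^{hQ})=0$, but in general the content is just $\pi^Q_{\geq 0}(F)=0$ together with $X^Q$ connective. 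Since $F$ is also the fibre of $X^{\Phi Q}\lra X^{tQ}$, the same map on homotopy groups $\pi^Q_i(X^{\Phi Q})\lra \pi^Q_i(X^{tQ})$ is an isomorphism for $i\geq 0$. It remains to check that $X^{\Phi Q}$ is itself connective, for then it is automatically the connective cover of $X^{tQ}$.

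For the connectivity of $X^{\Phi Q}$ I would use the first of the two cofibre sequences above, applied at fixed points: $X_{hQ}\lra X^Q\lra X^{\Phi Q}$. We have just observed $X_{hQ}$ is connective, and $X^Q$ is connective by the connective-cover hypothesis; the long exact sequence in $\pi^Q_*$ then forces $\pi^Q_i(X^{\Phi Q})=0$ for $i<0$, so $X^{\Phi Q}$ is connective. Combining: $X^{\Phi Q}$ is connective and $X^{\Phi Q}\lra X^{tQ}$ induces an isomorphism on $\pi^Q_i$ for $i\geq 0$, which is exactly the assertion that it is the connective cover. I do not expect a serious obstacle here — the only point requiring a little care is the identification of the two fibres, i.e. genuinely justifying that applying $(-)^Q$ to the map of cofibre sequences and using the equivalence $X_h\lra X_h$ on the left produces the map $X^{\Phi Q}\lra X^{tQ}$ on cofibres rather than merely an abstract equivalence of fibres; this is where I would be most careful, but it is formal once one writes out the octahedron.
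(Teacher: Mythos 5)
Your argument is correct and is essentially the paper's own proof, which consists of exactly the two observations you make: the fibres of the two vertical maps in the Tate square agree, and the connectivity of $X_{hQ}$ (from non-equivariant connectivity of $X$) together with that of $X^Q$ forces $X^{\Phi Q}$ to be connective via the cofibre sequence $X_{hQ}\to X^Q\to X^{\Phi Q}$. The one point you elide (as does the paper) is surjectivity of $\pi_0^Q(X^{\Phi Q})\to \pi_0^Q(X^{tQ})$, which requires $\pi_{-1}^Q(F)=0$ and not merely $\pi_{i}^Q(F)=0$ for $i\geq 0$; this extra vanishing follows from the connectivity of $X^Q$ via the boundary map in the fibre sequence $F\to X^Q\to X^{hQ}$, so there is no real gap.
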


\subsection{The $RO(Q)$-graded homotopy fixed point spectral sequence}
\label{subsec:ROQhfpss}

We note that we are trying to calculate the groups
$$X^{hQ}_{a+b\sigma}=[S^{a+b\sigma}, X^h]^Q=[S^{a+b\sigma}, F(EQ_+,
X)]^Q=[S^{a+b\sigma}\sm EQ_+, X]^Q=[S^a, F(S^{b\sigma}, X)^{hQ}]. $$

We could consider the separate homotopy fixed point spectral
sequences for the various spectra $F(S^{b\sigma}, X)$. To avoid
confusion later, we arrange this homologically from the start, so that it is in a
(vertically shifted) second quadrant:  
$$E_{s,t}^2(b)=H^{-s}(Q; X^{-t}(S^{b\sigma})))\Rightarrow
X^{-(s+t)}_Q(EQ_+\sm S^{b\sigma})=\pi_{s+t}(F(S^{b\sigma}, X)^{hQ})$$
with differentials
$$d^r_{s,t}: E^r_{s,t}(b) \lra E^r_{s-r, t+r-1}(b) . $$
Note that in the $E^2$-term $Q$ acts on $S^{b\sigma}$ by a degree
$(-1)^b$ map, and it is this that makes $X^*(S^{b\sigma})$ a
$Q$-module. 

However there is a great advantage to combining
these spectral sequences as $b$ varies, especially if $X$ is a ring spectrum, since the pairings 
$$S^{b_1\sigma}\sm S^{b_2\sigma}\lra S^{(b_1+b_2)\sigma}$$
give the whole construction a multiplicative structure. Although the
differentials all lie within the spectral sequence for a single value
of $b$, the spectral sequences all come from the same filtration, so
that the Leibniz rule applies across all the spectral sequences. 

\begin{defn}
The $RO(Q)$-graded homotopy fixed point spectral sequence  for $X$ is the trigraded
spectral sequence
$$E^2_{s,t,b}=H^{-s}(Q; X_{t+b\sigma})\Rightarrow [S^{s+t+b\sigma}\sm
EQ_+, X]^Q=:X^{hQ}_{s+t+b\sigma}$$
with differentials 
$$d^r_{s,t,b}: E^r_{s,t, b}\lra E^r_{s-r, t+r-1, b}. $$
\end{defn}

\section{Ordinary cohomology}
\label{sec:HZ}
We may depict the $RO(Q)$-graded coefficients of the Eilenberg-MacLane
spectrum, $H\Zu^Q_{\rost}$ as follows, where squares are copies of
$\Z$; circles are also copies of $\Z$, but the generator is twice what one might expect. 
Red dots are copies of $\Ftwo$. This was first calculated by
Stong \cite{Stong1, Stong2} and Waner \cite{Waner} and first published by Caruso \cite{Caruso}, but
it has appeared in several other places. The Mackey structure is made
explicit in \cite[Figure 2, Page 405]{HHRCfour}.

One notable feature is `The Gap', which constitutes the three lines of
slope 1 above the unit (i.e., the diagonals intersecting the $x$ axis
in $x=-3, -2, -1$). 

\begin{tikzpicture}
[scale =1.2]
\clip (-5, -5.5) rectangle (5, 6);
\draw[step=0.5, gray, very thin] (-9,-9) grid (9, 9);
\draw[blue, ->, thick](2,0)--(4.8, 0);
\draw (4.4, 0) node[anchor=south]{$\Z\cdot 1$};

\draw[blue, ->, thick](0,2)--(0, 4.8);
\draw (0,4.4) node[anchor=west]{$\Z\cdot \sigma$};

\draw (-2.5, -3.5) node[anchor=east,
draw=orange]{\Large{$H\Zu^Q_{\rost}$}};
\draw (1, 2) node[anchor=east, draw=yellow,
rotate=45]{{---------The----Gap ----------}};

\node at (0,0)  [shape = rectangle, draw]{};
\draw (0,0) node[anchor=west]{$1$};
\draw[red](0,0)--(0, -6);
\foreach \y in {1, 2, 3,4,5,6,7,8,9,10,11,12,13,14,15}
\draw (0,-\y/2) node[anchor=east] {$a^{\y}$};
\foreach \y in {1,2,3,4,5,6,7,8,9,10,11,12,13,14,15}
\node at (0,-\y/2) [fill=red, inner sep=1pt, shape=circle, draw] {};

\node at (1,-1)  [shape = rectangle, draw]{};
\draw (1, -1) node[anchor=west]{$u$};
\draw[red](1,-1)--(1, -6);
\foreach \y in {1,2,3,4,5,6,7,8,9,10,11,12,13,14,15}
\node at (1,-1-\y/2) [fill=red, inner sep=1pt, shape=circle, draw] {};

\node at (2,-2)  [shape = rectangle, draw]{};
\draw (2, -2) node[anchor=west]{$u^2$};
\draw[red](2,-2)--(2, -6);
\foreach \y in {1,2,3,4,5,6,7,8,9,10,11,12,13,14,15}
\node at (2,-2-\y/2) [fill=red, inner sep=1pt, shape=circle, draw] {};

\node at (3,-3)  [shape = rectangle, draw]{};
\draw (3, -3) node[anchor=west]{$u^3$};
\draw[red](3,-3)--(3, -6);
\foreach \y in {1,2,3,4,5,6,7,8,9,10,11,12,13,14,15}
\node at (3,-3-\y/2) [fill=red, inner sep=1pt, shape=circle, draw] {};

\node at (4,-4)  [shape = rectangle, draw]{};
\draw (4, -4) node[anchor=west]{$u^4$};
\draw[red](4,-4)--(4, -6);
\foreach \y in {1,2,3,4,5,6,7,8,9,10,11,12,13,14,15}
\node at (4,-4-\y/2) [fill=red, inner sep=1pt, shape=circle, draw] {};

\node at (5,-5)  [shape = rectangle, draw]{};
\draw (5, -5) node[anchor=west]{$u^5$};
\draw[red](5,-5)--(5, -6);
\foreach \y in {1,2,3,4,5,6,7,8,9,10,11,12,13,14,15}
\node at (5,-5-\y/2) [fill=red, inner sep=1pt, shape=circle, draw] {};

\node at (-1,1)  [shape = circle, draw]{};
\draw (-1,1) node[anchor=north]{$2u^{-1}$};
\draw[red](-1.5, 1.5) -- (-1.5, 6);
\foreach \y in {0, 1,2,3,4,5,6,7,8}
\node at (-1.5,1.5+\y/2) [fill=red, inner sep=1pt, shape=circle, draw] {};

\node at (-2,2)  [shape = circle, draw]{};
\draw (-2,2) node[anchor=north]{$2u^{-2}$};
\draw[red](-2.5, 2.5) -- (-2.5, 6);
\foreach \y in {0, 1,2,3,4,5,6,7,8}
\node at (-2.5,2.5+\y/2) [fill=red, inner sep=1pt, shape=circle, draw] {};

\node at (-3,3)  [shape = circle, draw]{};
\draw (-3,3) node[anchor=north]{$2u^{-3}$};
\draw[red](-3.5, 3.5) -- (-3.5, 6);
\foreach \y in {0, 1,2,3,4,5,6,7,8}
\node at (-3.5,3.5+\y/2) [fill=red, inner sep=1pt, shape=circle, draw]
{};

\node at (-4,4)  [shape = circle, draw]{};
\draw (-4,4) node[anchor=north]{$2u^{-4}$};
\draw[red](-4.5, 4.5) -- (-4.5, 6);
\foreach \y in {0, 1,2,3,4,5,6,7,8}
\node at (-4.5,4.5+\y/2) [fill=red, inner sep=1pt, shape=circle, draw] {};

\end{tikzpicture}

\subsection{Cell approach}
Note first that
$$H\Zu^Q_{a+b\sigma}=[S^{a+b\sigma},
H\Zu]=H^{-a}_Q(S^{b\sigma}; H\Zu). $$ 
Since $H\Zu$ satisfies the dimension axiom,  it is natural to calculate
this by finding a cell structure for $S^{b\sigma}$.

Indeed, we start from the cofibre sequence
$$Q_+\lra S^0 \lra S^{\sigma}. $$
Suspending by $n\sigma$ and using the untwisting isomorphism
$S^{n\sigma }\sm Q_+\simeq S^n \sm Q_+$,  this gives
$$S^n \sm Q_+ \lra S^{n\sigma} \lra S^{(n+1)\sigma}.$$

First  we consider a row in the lower half plane,
$H\Zu^Q_{*-n\sigma}$,  when $n\geq 0$. We argue as follows.
We begin with  $H^Q_*(S^0; \Zu)=\Z$ (by the dimension axiom). Adding an $(n+1)$-cell to deduce 
$H^Q_*(S^{(n+1)\sigma}; \Zu)$ from $H^Q_*(S^{n\sigma}; \Zu)$ we only
ever have to determine the map 
$$\Z =H^Q_n(S^n \sm Q_+; \Zu) \lra  H^Q_n(S^{n\sigma}; \Zu)=\Z$$
when $n$ is even. It is always multiplication by 2: 
for $n=0$ it is $\pi_*=\ind_1^Q: \Zu (Q/1)\lra \Zu (Q/Q)$. For larger even
$n$ we note inductively that 
$$H^Q_n(S^{n\sigma}; \Zu)\stackrel{\cong}\lra H^Q_n(S^{n\sigma}/S^{(n-1)\sigma};
\Zu)$$
is an isomorphism. The composite
$$S^n \sm Q_+\lra S^{n\sigma}\lra S^{n\sigma}/S^{(n-1)\sigma}\simeq
S^n \sm Q_+$$
is the $n$th suspension of the case $n=0$, where it is $\ind_1^Q$.

Next, consider a row in the upper half plane,  $H\Zu^Q_{*+n\sigma}$
when $n\geq 0$. We argue as follows. 
We begin with  $H_Q^*(S^0; \Zu)=\Z$ (by the dimension axiom). Adding an $(n+1)$-cell to deduce 
$H^*_Q(S^{(n+1)\sigma}; \Zu)$ from $H^*_Q(S^{n\sigma}; \Zu)$ we only
ever have to decide on the map 
$$\Z =H^n_Q(S^{n\sigma} ; \Zu) \lra  H^n_Q(S^{n}\sm Q_+; \Zu)=\Z$$
when $n$ is even. 
For $n=0$ it is $\pi^*=\res^Q_1: \Z (Q/Q)\stackrel{\cong}\lra \Z (Q/1)$. 
For larger even $n$ it is always multiplication by 2: 
 we note inductively that 
$$H^n_Q(S^{n\sigma}/S^{(n-1)\sigma})\lra H^n_Q(S^{n\sigma};\Zu)$$
is multiplication by $2$. The composite
$$S^n \sm Q_+\lra S^{n\sigma}\lra S^{n\sigma}/S^{(n-1)\sigma}\simeq
S^n \sm Q_+$$
is the $n$th suspension of the case $n=0$, where it is the sum $N=1+x$ 
over multiplication by group elements, where $Q=\langle x \rangle$.

\subsection{Quotient approach}
The cohomology of a {\em space} only depends on the restriction maps in the 
Mackey coefficients, and if these form a constant coefficient system (i.e.,
restriction maps are the identity) it follows that it is the
nonequivariant cohomology of the quotient: for any $Q$-space $X$
$$H^*_Q(X; \Zu)=H^*(X/Q; \Z). $$

For any $n\geq 0$ we have  $S^{(n+1)\sigma }=S^0* S(n\sigma)$, and hence
$$H^*_Q(S^{(n+1)\sigma}; \Zu)=H^*(S^0*\R P^n;\Z) .$$
This gives $H\Zu^Q_{*+n\sigma}$ for $n\geq 0$ from the well known
cohomology of projective spaces. This fills in the upper half plane. 

For the lower half plane we use the fact that for {\em free} $Q$-spectra 
$$H_*^Q(X; \Zu)=H_*(X/Q;\Z) .$$
This is not true  for arbitrary $Q$-spectra (or even for $Q$-spaces). 
We then use the cofibre sequence
$$S(n\sigma)_+ \lra S^0 \lra S^{n\sigma}$$
We have 
$$H_*^Q(S(n\sigma)_+; \Zu)=H_*(\R P^n_+;\Z) ;  $$
since $S^0$ only has homology in degree 0, we need only check that 
$$S(n\sigma)_+ \lra S^0$$
is multiplication by $2$ in degree 0. When $n=0$ this is part of the
definition of $\Zu$, and for higher dimensions, we use the fact that 
$S(\sigma)_+\lra S(n\sigma)_+$ is an isomorphism in degree $0$
since only higher cells have been attached. This fills in the lower
half plane. 

We note in passing that this also shows $H^Q_*(S^{n\sigma}; \Zu)\not
\cong H^Q_*(S^{n\sigma}/Q); \Z)$ for $n\geq 1$. 

\subsection{Tate approach}
The cleanest approach is to use the Tate diagram
$$\diagram
H\Zu_h\rto \dto_{\simeq} &H\Zu \rto \dto &H\Zu^{\Phi}\dto \\
H\Zu_h \rto &H\Zu^h\rto &H\Zu^t
\enddiagram$$

The procedure is to start with homotopy fixed points.  From this 
we can immediately deduce Tate cohomology by inverting $a$. This then
gives homotopy orbits by using the bottom row. These terms only depend
on the underlying spectrum (in this case $H\Zu$) made free. The geometric fixed points depends on the spectrum
itself, so needs further input. Then the answer follows from the Tate
square. 

\begin{lemma}
$$H\Zu^{hQ}_{\rost}= BB [u,u^{-1}], $$
where $BB=\Z [a]/(2a)$, $|a|=-\sigma, |u|=2-2\sigma$ ($a$ is the Euler
class of $\sigma$ and $u$ is a Thom class). 
\end{lemma}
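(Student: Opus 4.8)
The plan is to compute $H\Zu^{hQ}_{\rost}$ by running the $RO(Q)$-graded homotopy fixed point spectral sequence of Definition~\ref{subsec:ROQhfpss} for $X = H\Zu$. The input is the group cohomology $H^{-s}(Q; (H\Zu)_{t+b\sigma})$. Non-equivariantly $H\Zu$ is just $H\Z$, so $\pi_*(H\Zu) = \Z$ concentrated in degree $0$, and as remarked in the excerpt, $Q$ acts on $S^{b\sigma}$ by $(-1)^b$, so the coefficient $Q$-module $(H\Zu)_{t+b\sigma} = \pi_{t}(H\Z \sm S^{-b\sigma})$ is $\Z$ in internal degree $t = 0$ with the trivial action when $b$ is even and the sign action when $b$ is odd. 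Thus the $E^2$-page is: for $b$ even, $H^{-s}(Q; \Z) = \Z$ for $s=0$ and $\Ftwo$ for $s$ negative even, zero otherwise; for $b$ odd, $H^{-s}(Q; \Z_{\mathrm{sgn}}) = \Ftwo$ for $s$ negative odd, zero otherwise. Here I should fix conventions so that $s \le 0$ matches the "second quadrant, vertically shifted" setup.

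Next I would identify the multiplicative generators. The class $a = a_\sigma$ sits in filtration $s=0$ (it comes from the underlying map $S^0 \to S^\sigma$, i.e.\ it is detected on the bottom of the tower) in degree $-\sigma$, i.e.\ $(s,t,b) = (0,0,-1)$ -- wait, more precisely $a$ lives where $H^0(Q;\Z_{\mathrm{sgn}})$ would be, but that group is zero, so $a$ is actually represented in positive cohomological degree; the cleanest statement is that $2a = 0$ because $a$ factors through the transfer-trivial part, matching $H^*(Q;-)$ being $2$-torsion in positive degrees. The Thom class $u$ of degree $2-2\sigma$ is the canonical orientation class; under the spectral sequence it is a permanent cycle (being a genuine equivariant class restricting to a unit non-equivariantly, so nothing can hit it and it supports no differential) living in filtration $0$, and it is invertible after passing to $X^h = F(EQ_+, X)$ because $EQ_+ \sm S^{2\sigma} \simeq EQ_+ \sm S^2$ (the untwisting equivalence for free spectra), so smashing with $EQ_+$ kills the difference between $\sigma$ and $1$ in pairs. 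Hence the ring of permanent cycles on $E^2$ is exactly $BB[u,u^{-1}]$ with $BB = H^*(Q;\Z) = \Z[a]/(2a)$, $|a| = -\sigma$, $|u| = 2-2\sigma$.

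Finally I would argue the spectral sequence collapses at $E^2$ and has no additive or multiplicative extensions. Collapse: every class in positive cohomological filtration is an $a$-power times a $u$-power, and $a$, $u$ are permanent cycles, so by the Leibniz rule across the combined spectral sequences (which is the whole point of grading over $RO(Q)$) all differentials vanish. No hidden extensions: the only possible additive extension is along the $a$-towers of $\Ftwo$'s, but $2a = 0$ already holds in $BB$, and the $\Z$ in filtration zero for each even $b$ assembles into the $u$-powers, so $H\Zu^{hQ}_{\rost} = BB[u,u^{-1}]$ as a ring. The main obstacle is bookkeeping rather than depth: one must be careful with the sign action on $S^{b\sigma}$ so that the $\Ftwo$'s land on the correct (odd versus even) cohomological lines, and one must justify that $a$ and $u$ really are permanent cycles with the asserted multiplicative relations -- the cleanest justification for $u$ being a permanent cycle and becoming invertible is the untwisting equivalence $EQ_+ \sm S^{n\sigma} \simeq EQ_+ \sm S^n$ already used elsewhere in the paper, and for $a$ one uses that it is the image of a genuine class under $X \to X^h$ together with $H^*(Q;\Z)$ being $2$-torsion above degree $0$.
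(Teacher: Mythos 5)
Your proposal is correct and takes essentially the same route as the paper: both run the $RO(Q)$-graded homotopy fixed point spectral sequence, identify the $E_2$-page as $\Z[a]/(2a)[u,u^{-1}]$ concentrated on the line $t+b=0$, and observe that the spectral sequence collapses with no extension problems. The only difference is presentational: the paper starts at $E_1$ using the periodic cell filtration of $EQ_+$ and packages the relation $2a=0$ as the differential $d_1\lambda=2a$ on an auxiliary class $\lambda$ of degree $1-\sigma$ with $u=\lambda^2$, whereas you start at $E_2$ by quoting $H^*(Q;\Z)$ and $H^*(Q;\Z_{\mathrm{sgn}})$ directly.
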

\begin{remark}
\label{rem:groupcohom}
(i) Notice how this is related to ordinary group cohomology, which occurs
along the negative $x$-axis: 
$$H^*(BQ_+; \Z)=H\Zu^{hQ}_*=\Z [y]/(2y), \mbox{ where } y=a^2u^{-1}
\mbox{ is of codegree 2.} $$

(ii) The notation $BB$ stands for  Basic Block, and the usefulness of
the notation will emerge as we consider other examples.  
\end{remark}

\begin{proof}
We describe the $RO(Q)$-graded spectral sequence of Subsection
\ref{subsec:ROQhfpss} for $H\Zu$.
It is convenient to describe it from the $E_1$-term if we choose a good
cell structure  on $EQ_+$.  Indeed we use the filtration 
$$S(\sigma)_+\subset  S(2\sigma)_+ \subset  S(3\sigma)_+\subset  \cdots \subset
S(\infty \sigma)_+=EQ_+. $$
There is one free $Q$-cell in each degree and in cellular homology this gives the standard 2-periodic resolution 
$$0\lla \Z \stackrel{\eps}\lla \Z Q \stackrel{1-x}\lla \Z Q \stackrel{1+x}\lla \Z Q
\stackrel{1-x}\lla \cdots $$
of $\Z$ over the group ring $\Z Q$.

We may then describe the $E^1$ term conveniently as
$$E^1_{*,*,*}=\Z [a ] [\lambda, \lambda^{-1}]$$
where the $(s,t, b)$ gradings are 
$$|a|=(-1, 1, -1), |\lambda |=(0, 1, -1). $$
The differential is given by taking  $d_1a=0$ and 
$$d_1\lambda =2a.$$
This is immediate from the algebraic resolution since $Q$ acts
non-trivially on $H^b(S^{b\sigma})$ for
$b$ odd. 

It follows that $u=\lambda^2$ is a cycle and that there are no further
differentials. 
\end{proof}

We now immediately deduce the coefficients of the Tate theory by
inverting $a$, since $S^{\infty \sigma}\simeq S^0[1/a]$, and then the
homotopy orbits by taking local cohomology. 

\begin{cor}
The Tate cohomology is given by 
$$H\Zu^{tQ}_{\rost}=\Ftwo [a, a^{-1}] [u,u^{-1}], $$
and the coefficients of the homotopy orbits by
$$H\Zu_{hQ}^{\rost}=NB [u, u^{-1}]$$
where,   $NB=\Z \oplus \Sigma^{-1} \Ftwo [a]^{\vee}$. 
We note also that $NB\simeq R\Gamma_{(a)} (BB)$, i.e., it is the sum
of right derived functors of the $a$-power torsion functor
$\Gamma_{(a)}$. 
\end{cor}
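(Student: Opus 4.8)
The plan is to feed the previous Lemma into the bottom row of the Tate diagram; no new geometric input is needed beyond the homotopy fixed point calculation, and in particular the pullback property of the Tate square is not yet required.

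\emph{Tate cohomology.} Since $\tilde{E}Q=S^{\infty\sigma}$ and $H\Zu^{t}=F(EQ_+,H\Zu)\wedge\tilde{E}Q$, smashing a $Q$-spectrum with $S^{\infty\sigma}$ inverts $a$, so $H\Zu^{tQ}_{\rost}=H\Zu^{hQ}_{\rost}[1/a]=BB[u,u^{-1}][1/a]$. It then suffices to invert $a$ in the ring $BB=\Z[a]/(2a)$: as soon as $a$ is a unit the relation $2a=0$ forces $2=0$, so $BB[1/a]=\Ftwo[a,a^{-1}]$, and hence $H\Zu^{tQ}_{\rost}=\Ftwo[a,a^{-1}][u,u^{-1}]$.

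\emph{Homotopy orbits.} The bottom row of the Tate diagram is the cofibre sequence $H\Zu_h\to H\Zu^h\to H\Zu^t$; applying $(-)^Q$ gives the cofibre sequence $H\Zu_{hQ}\to H\Zu^{hQ}\to H\Zu^{tQ}$ whose second map is exactly the $a$-localization above. The long exact sequence of $\rost$-graded homotopy is the promised ``simple Local Cohomology Theorem'': because the ideal $(a)$ is principal, the local cohomology complex has only the two terms $[\,BB\to BB[1/a]\,]$, and the long exact sequence identifies $H\Zu_{hQ}^{\rost}$ with $R\Gamma_{(a)}(BB[u,u^{-1}])=R\Gamma_{(a)}(BB)[u,u^{-1}]$ (using that $u$ is a unit), where $R\Gamma_{(a)}(BB)$ has $H^0=\Gamma_{(a)}(BB)$ and $H^1=\cok(BB\to BB[1/a])$.

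\emph{Evaluating $R\Gamma_{(a)}(\Z[a]/(2a))$.} As a graded abelian group $BB$ is a copy of $\Z$ in degree $0$ together with a copy of $\Ftwo$ in each degree $-k\sigma$, $k\ge 1$, with $a$ acting by the evident shift. The relation $2a=0$ says precisely that $2\cdot 1$ is annihilated by $a$, and a one-line check shows that $2\Z\cdot 1$ is \emph{all} of the $a$-power torsion; thus $\Gamma_{(a)}(BB)=2\Z\cong\Z$, concentrated in degree $0$. The localization map $BB\to\Ftwo[a,a^{-1}]$ is reduction mod $2$ in degree $0$ and an isomorphism onto the $\Ftwo$-summand in each degree $-k\sigma$ with $k\ge 1$, so its image is $\Ftwo[a]$ and its cokernel is $\Ftwo[a,a^{-1}]/\Ftwo[a]$, which is $\Ftwo[a]^\vee$ up to a degree shift. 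Packaging $H^0$ in homotopical degree $0$ and $H^1$ with the single suspension supplied by the cofibre sequence yields $NB=\Z\oplus\Sigma^{-1}\Ftwo[a]^\vee$; there is no extension problem, since after tensoring with $\Z[u,u^{-1}]$ (with $u$ in an even degree) the $\Z$-summand lives in even integer degrees while the $\Ftwo[a]^\vee$-summand lives in odd ones, so at most one of $H^0,H^1$ contributes in any fixed $RO(Q)$-degree. Finally $H\Zu_{hQ}^{\rost}=NB[u,u^{-1}]$ by $u$-periodicity, and the assertion $NB\simeq R\Gamma_{(a)}(BB)$ merely restates the last two paragraphs.

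The one delicate point is the $RO(Q)$-grading bookkeeping: checking that the torsion $\Z$ sits in degree $0$ with preferred generator $2$ --- which is why it is drawn as a ``circle'' (twice the expected generator) rather than a ``square'' --- and tracking the $\Sigma^{-1}$ coming from the cofibre sequence so that the co-torsion $\Ftwo[a]^\vee$ lands exactly on the diagonals of red $\Ftwo$'s running upward from the classes $2u^{-k}$. Each of these is a mechanical degree count once the identification with local cohomology is in place, so the substance of the proof really is just the localization of $\Z[a]/(2a)$.
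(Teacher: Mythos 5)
Your proposal is correct and follows exactly the route the paper takes: the paper's entire justification is the sentence preceding the corollary ("deduce the Tate theory by inverting $a$, since $S^{\infty\sigma}\simeq S^0[1/a]$, and then the homotopy orbits by taking local cohomology"), and you have simply filled in the localization of $\Z[a]/(2a)$, the identification $\Gamma_{(a)}(BB)=2\Z$, and the degree bookkeeping that the paper leaves implicit.
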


\begin{remark}
The notation $NB$ stands for Negative Block, and as for $BB$ the
usefulness of the notation should emerge once we have other examples.  
\end{remark}

For geometric fixed points we apply Lemma \ref{lem:QcoverPhiQcover}.

\begin{lemma}
The geometric fixed points are given by 
$H\Zu^{\Phi Q}_{\rost}=\Ftwo [a,a^{-1}][u]. $
\end{lemma}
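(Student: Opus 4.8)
The plan is to read off $H\Zu^{\Phi Q}_{\rost}$ directly from the Tate cohomology $H\Zu^{tQ}_{\rost}=\Ftwo[a,a^{-1}][u,u^{-1}]$ computed in the preceding corollary, using Lemma \ref{lem:QcoverPhiQcover}. Since $H\Zu$ is non-equivariantly connective (it is the integral Eilenberg--MacLane spectrum, with homotopy concentrated in degree $0$), the lemma applies: the map $H\Zu^{\Phi Q}\lra H\Zu^{tQ}$ is a connective cover. Both spectra are $a$-periodic (the geometric fixed points because $H\Zu^{\Phi}=H\Zu\sm\tilde EQ$ and $\tilde EQ=S^{\infty\sigma}$ is already $a$-inverted, the Tate spectrum by the explicit formula), so to identify $H\Zu^{\Phi Q}_{\rost}$ inside $H\Zu^{tQ}_{\rost}$ it suffices to work in integer gradings and then propagate by $a$-periodicity.

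The key step is therefore to determine which part of $\Ftwo[a,a^{-1}][u,u^{-1}]$ survives to the connective cover. In integer degree $n$, the Tate homotopy $H\Zu^{tQ}_n$ is spanned by the monomials $a^i u^j$ with $-i+2j=n$, i.e.\ it is nonzero (and one-dimensional over $\Ftwo$) in every degree $n$ — positive and negative alike — because for any $n$ one can choose $j$ large and $i=2j-n$. Connective cover means we discard exactly the negative integer degrees. The class $u$ has degree $2$ and the class $u^{-1}$ has degree $-2$; passing to the connective cover kills $u^{-1}$ and all its multiples by powers of $a^{\pm 1}$, leaving the subring generated over $\Ftwo[a,a^{-1}]$ by $u$ alone. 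Hence $H\Zu^{\Phi Q}_{\rost}=\Ftwo[a,a^{-1}][u]$, with $|a|=-\sigma$ and $|u|=2-2\sigma$.

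There is one point that needs a little care: one must check that the connective cover is genuinely a subring and not merely a graded subgroup, i.e.\ that no nontrivial extensions or multiplicative corrections occur. This is clean here because everything is an $\Ftwo$-vector space in each $RO(Q)$-degree, so there are no additive extension problems, and the multiplication is forced: $u\cdot u=u^2$ must be the appropriate power of $u$ by comparison with $H\Zu^{tQ}_{\rost}$, where multiplication is already known. The only genuine content is the identification of the connective cover with the non-negative-powers-of-$u$ subalgebra, which follows from the degree bookkeeping above together with $a$-periodicity. I expect this degree-counting step — confirming precisely that $u^{-1}$ and its $a$-multiples lie below the connectivity line while $u^j$ for $j\geq 0$ do not — to be the only real obstacle, and it is routine.
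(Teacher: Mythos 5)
There is a genuine gap at the one point where the real content lives. You invoke Lemma \ref{lem:QcoverPhiQcover} as though its only hypothesis were that $H\Zu$ is non-equivariantly connective, but that lemma is a conditional: it assumes in addition that $(H\Zu)^Q\lra (H\Zu)^{hQ}$ is the connective cover, and only then concludes the same for $(H\Zu)^{\Phi Q}\lra (H\Zu)^{tQ}$. That extra hypothesis is not automatic and is exactly the input your argument omits. (For instance the sphere spectrum is non-equivariantly connective, yet $S^{\Phi Q}\simeq S\lra S^{tQ}$ is a $2$-adic completion map, not a connective cover.) The paper's proof consists entirely of verifying this hypothesis for $H\Zu$: by definition of the Eilenberg--MacLane spectrum $(H\Zu)^Q=H\Z$, while the homotopy fixed point calculation already done shows $(H\Zu)^{hQ}_*$ is group cohomology, vanishing in positive degrees, with $(H\Zu)^Q_0\lra (H\Zu)^{hQ}_0$ an isomorphism; hence $(H\Zu)^Q\lra(H\Zu)^{hQ}$ is indeed a connective cover. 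You need this step; without it the lemma gives nothing.

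The remainder of your argument (read off the connective cover of $\Ftwo[a,a^{-1}][u,u^{-1}]$ and propagate by $a$-periodicity) is the right idea and is what the paper does implicitly, but the bookkeeping in the middle is off: since $|a|=-\sigma$ and $|u|=2-2\sigma$, the monomial $a^iu^j$ lies in an integer degree only when $i=-2j$, so the integer-graded part of $H\Zu^{tQ}_{\rost}$ is $\Ftwo[t,t^{-1}]$ with $t=a^{-2}u$ of degree $2$ --- it is zero in odd degrees, not one-dimensional in every degree as you claim. The conclusion you draw (connective cover $=\Ftwo[t]$, which $a$-periodizes to $\Ftwo[a,a^{-1}][u]$) is nonetheless correct.
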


\begin{proof}
By definition of the Eilenberg-MacLane spectrum, $(H\Zu)^Q=H\Z$. It is
immediate that $(H\Zu)^{hQ}_*$ is zero in positive degrees, and the
algebraic resolution shows that $(H\Zu)^{Q}_0\stackrel{\cong}\lra (H\Zu)^{hQ}_0$ is an
isomorphism. The conclusion follows from Lemma \ref{lem:QcoverPhiQcover}.
\end{proof}

The calculation of $H\Zu^Q_{\rost}$ now concludes by using the Tate 
square. The only difference between $H\Zu^Q_{\rost}$ and the more
regular $H\Zu^{hQ}_{\rost}$ comes from the negative Tate $a$-columns.
The Tate square therefore  induces an actual  pullback square in gradings
with even integer-part. The $a$-divisible columns in odd integer
degrees $\leq -3$ come from the cokernel of $(H\Zu)^{hQ}_{\rost}\lra (H\Zu)^{tQ}_{\rost}$.

\begin{cor}
$$H\Zu^Q_{\rost}=BB  [u] \oplus u^{-1} \cdot NB [u^{-1}],  $$
where $BB=\Z [a]/(2a)$ as before and 
$$NB=\Z  \oplus \Sigma^{-\rho}\Ftwo [a]^{\vee}. $$
The $BB$-module structure  of $NB$ is  as suggested by the notation,
and multiplication by $u$ relates factors of $NB$ and $BB$ according
to the notation,  with  the map $u^{-1} \cdot NB \lra BB$
being the composite $NB\lra \Z \lra BB$.
\end{cor}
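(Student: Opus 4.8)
The plan is to assemble the answer from the Tate square, using the three inputs already computed: $H\Zu^{hQ}_{\rost}=BB[u,u^{-1}]$, $H\Zu^{tQ}_{\rost}=\Ftwo[a,a^{-1}][u,u^{-1}]$, and $H\Zu^{\Phi Q}_{\rost}=\Ftwo[a,a^{-1}][u]$. By the last paragraph before the corollary, the Tate square is a homotopy pullback, so $H\Zu^Q_{\rost}$ is computed from the Mayer--Vietoris sequence associated to the pullback square $H\Zu^Q \to H\Zu^{hQ} \times H\Zu^{\Phi Q} \to H\Zu^{tQ}$. First I would split the computation according to the parity of the integer part of the grading, since the map $H\Zu^{\Phi Q}_{\rost}\to H\Zu^{tQ}_{\rost}$ is determined by $\Ftwo[a,a^{-1}][u]\hookrightarrow \Ftwo[a,a^{-1}][u,u^{-1}]$, and by Lemma~\ref{lem:QcoverPhiQcover} the map $H\Zu^{\Phi Q}\to H\Zu^{tQ}$ is a connective cover, hence an isomorphism in nonnegative $u$-powers and zero in negative ones.

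The next step is to handle the even-integer-part gradings, where the Tate square induces an honest pullback square of abelian groups (there is no lim$^1$/cokernel contribution because the relevant homotopy groups are concentrated so that the Mayer--Vietoris connecting map vanishes, exactly as stated in the text). In nonnegative powers of $u$ the pullback of $BB[u,u^{-1}]$ and $\Ftwo[a,a^{-1}][u]$ over $\Ftwo[a,a^{-1}][u,u^{-1}]$ recovers $BB[u]$: one checks that an element of $H\Zu^{hQ}$ in the image of $H\Zu^{\Phi Q}$ is precisely one with nonnegative $u$-exponent, and that $BB\to\Ftwo[a,a^{-1}]$ is the localization away from $a$ composed with reduction mod $2$, which is injective after inverting $a$ on the $\Ftwo[a]$-part. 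In negative powers of $u$, the homotopy fixed points contribute $u^{-1}\cdot NB[u^{-1}]$ via the lower cofibre sequence; here one uses the Corollary identifying $H\Zu_{hQ}^{\rost}=NB[u,u^{-1}]$ together with the top row of the Tate square.

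Then I would treat the odd-integer-part gradings. Here $H\Zu^{hQ}_{\rost}$ vanishes (the classes $a^j u^k$ with $j$ even and $u$ of even integer part force everything in $H\Zu^{hQ}$ to have even integer part), so the pullback is controlled entirely by the cokernel of $H\Zu^{hQ}_{\rost}\to H\Zu^{tQ}_{\rost}$ shifted by one. This cokernel is the $a$-divisible part $a^{-1}\Ftwo[a^{-1}]$ in each even power of $u$, which after the degree shift lands in odd integer degrees $\leq -3$ — exactly the negative Tate $a$-columns described in the text. Matching up the bookkeeping of $u$-powers and the shift $\Sigma^{-\rho}$ (rather than $\Sigma^{-1}$, because passing from the $hQ$ to the $Q$ computation absorbs one factor of $u$) gives the stated form $NB=\Z\oplus\Sigma^{-\rho}\Ftwo[a]^\vee$ and pins down that $u^{-1}\cdot NB[u^{-1}]$ is the negative block. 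Finally, the module structures: the $BB$-action on $NB$ is forced by the $BB$-action on $H\Zu_{hQ}^{\rost}$ from the previous Corollary, and the claim that $u^{-1}\cdot NB\to BB$ is the composite $NB\to\Z\to BB$ follows from naturality of the Tate square map $H\Zu_{hQ}\to H\Zu$ (the map $X_h\to X$) together with the fact that multiplication by $u$ raises filtration.

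The main obstacle will be the odd-degree bookkeeping: verifying that the connecting homomorphism in the Mayer--Vietoris sequence is injective exactly on the $a$-divisible negative columns and zero elsewhere, and getting the $u$-power shift and the $\Sigma^{-\rho}$ (not $\Sigma^{-1}$) correct so that the negative tails land in integer degrees $\leq-3$ and not $\leq -1$. This is where one must be careful that the relevant input is $H\Zu_{hQ}$ (with its own $u$-periodicity) rather than $H\Zu^{hQ}$, and that the Tate square, not just the homotopy-fixed-point computation, is doing the work; everything else is a matter of matching names to the already-computed pieces.
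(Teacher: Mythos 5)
Your proposal follows essentially the same route as the paper, which disposes of this corollary in two sentences: the Tate square gives an honest pullback of homotopy groups in gradings with even integer part (since $H\Zu^{tQ}_{\rost}$ is concentrated there, killing the Mayer--Vietoris boundary), and the $a$-divisible columns in odd integer degrees $\leq -3$ arise as the cokernel of $H\Zu^{hQ}_{\rost}\oplus H\Zu^{\Phi Q}_{\rost}\lra H\Zu^{tQ}_{\rost}$, shifted by one. Your expanded bookkeeping is correct; the only soft spot is the parenthetical claim that the $\Sigma^{-\rho}$ (versus $\Sigma^{-1}$) arises because ``passing from $hQ$ to $Q$ absorbs a factor of $u$'', which is not really an argument --- the honest check is that the cokernel at $u^{-1}$ sits in degrees $-3+m\sigma$ and one then matches this against the chosen normalisation of $\Ftwo[a]^{\vee}$.
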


\section{$K$-theory with reality}
\label{sec:kR}
We now repeat the exercise with the connective version of Atiyah's
$K$-theory with Reality: $k\R$. Again squares are copies of
$\Z$ and dots are copies of $\Ftwo$; circles are also copies of $\Z$, 
but the generator is twice what one might expect. 
This calculation
first appears in \cite{Dugger}, and there are  alternative approaches
in \cite{kobg, BPRn}. 

One notable feature is `The Gap', which constitutes the three lines of
slope 1 above the unit (i.e., the diagonals intersecting the $x$ axis
in $x=-3, -2, -1$.). In the periodic form of the theory (i.e., with
the degree $\rho$ class $\vb$ inverted) the
$a$-columns vanish and The Gap is copied across the page with the $U$-periodicity.

\begin{tikzpicture}
[scale =1.2]
\clip (-5, -5.5) rectangle (5, 6);
\draw[step=0.5, gray, very thin] (-9,-9) grid (9, 9);

\draw[blue, ->, thick](1.6,0)--(4.8, 0);
\draw (2.9, 0) node[anchor=north]{$\Z\cdot 1$};

\draw[blue, ->, thick](0,1.65)--(0, 5.4);
\draw (0,4.5) node[anchor=east]{$\Z\cdot \sigma$};

\draw (-2.5, -3.5) node[anchor=east,
draw=orange]{\Large{$k\R^Q_{\rost}$}};
\draw (1, 2) node[anchor=east, draw=yellow,
rotate=45]{{---------The----Gap ----------}};

\draw [->] (0,0)-- (8, 8);
\node at (0,0)  [shape = rectangle, draw]{};
\draw (0,0) node[anchor=east]{1};

\draw [->](1,-1)-- (9, 7);
\node at (1,-1) [shape=circle, draw] {};
\draw (1,-1) node[anchor=east]{$2u$};
\draw (-1,1) node[anchor=east]{$2u^{-1}$};

\draw[purple] (0,0) -- (0,-8);

\draw[red] (0,-0.5)-- (8, 7.5);
\draw[red] (0,-1)-- (8, 7);
\foreach \y in {1, 2, 3,4,5,6,7,8,9,10,11,12,13,14,15}
\draw (0,-\y/2) node[anchor=east] {$a^{\y}$};
\foreach \y in {1,2,3,4,5,6,7,8,9,10,11,12,13,14,15}
\node at (0,-\y/2) [fill=red, inner sep=1pt, shape=circle, draw] {};

\draw[pink] (0,0)-- (1,-1);

\draw [->] (2,-2)-- (8, 4);
\node at (2,-2)  [shape = rectangle, draw]{};
\draw (2,-2) node[anchor=east]{$U$};
\draw (-2,2) node[anchor=east]{$2U^{-1}$};

\draw [->](3,-3)-- (9, 3);
\node at (3,-3) [shape=circle, draw] {};
\draw (3,-3) node[anchor=east]{$2u^3$};
\draw (-3,3) node[anchor=east]{$2u^{-3}$};

\draw[purple] (2,-2) -- (2,-8);

\draw[red] (2,-2.5)-- (7, 2.5);
\draw[red] (2,-3)-- (7, 2);
\foreach \y in {1,2,3,4,5,6,7,8,9,10,11,12,13,14,15}
\node at (2,-2-\y/2) [fill=red, inner sep=1pt, shape=circle, draw] {};

\draw[pink] (2,-2)-- (3,-3);

\draw [->] (4,-4)-- (8, 0);
\node at (4,-4)  [shape = rectangle, draw]{};
\draw (4,-4) node[anchor=east]{$U^2$};
\draw (-4,4) node[anchor=east]{$2U^{-2}$};

\node at (5,-5) [shape=circle, draw] {};

\draw[purple] (4,-4) -- (4,-8);

\draw[red] (4,-4.5)-- (7, -1.5);
\draw[red] (4,-5)-- (7, -2);
\foreach \y in {1,2,3,4,5,6,7,8,9,10,11,12,13,14,15}
\node at (4,-4-\y/2) [fill=red, inner sep=1pt, shape=circle, draw] {};

\draw[pink] (4,-4)-- (5,-5);


\draw (-2,2)-- (5, 9);
\draw (4,8) node[anchor=east]{$(2, \overline{v}_1)P$};
\node at (-2,2) [shape=circle, draw] {};
\node at (-1.5,2.5) [shape=rectangle, draw] {};

\draw (-1,1)-- (7, 9);
\node at (-1,1) [shape=circle, draw] {};

\foreach \y in {0, 1,2,3,4,5,6,7,8}
\node at (-2.5,2.5+\y/2) [fill=red, inner sep=1pt, shape=circle, draw]
{};
\draw[red] (-2.5, 2.5) --(-2.5, 8);

\node at (-1.5,1.5) [fill=red, inner sep=1pt, shape=circle, draw] {};
\draw[red] (-1.5,1.5)-- (5, 8);
\node at (-1.5,2.0) [fill=red, inner sep=1pt, shape=circle, draw] {};
\draw[red] (-1.5,2.0)-- (5, 8.5);

\draw (-4,4)-- (5, 13);
\node at (-4,4) [shape=circle, draw] {};
\node at (-3.5,4.5) [shape=rectangle, draw] {};

\draw (-3,3)-- (7, 13);
\node at (-3,3) [shape=circle, draw] {};

\foreach \y in {0, 1,2,3,4,5,6,7,8}
\node at (-4.5,4.5+\y/2) [fill=red, inner sep=1pt, shape=circle, draw]
{};
\draw[red] (-4.5, 4.5) --(-4.5, 8);

\node at (-3.5,4) [fill=red, inner sep=1pt, shape=circle, draw] {};
\draw[red] (-3.5,4)-- (5, 12.5);
\node at (-3.5,3.5) [fill=red, inner sep=1pt, shape=circle, draw] {};
\draw[red] (-3.5,3.5)-- (5, 12);

\end{tikzpicture}

\subsection{Adding cells}
The approach through a cell structure of $S^{b\sigma}$  is a fairly
unattractive since the value of the theory on cells is complicated: 
$k\R^Q_*=ko_*$.   On the other hand, it is effectively what was done  in
\cite{kobg},  by working up the Tate filtration, so we will not repeat it here.

\subsection{Quotient approach}
The good behaviour on quotients is only easy to see on free spectra,
so the content can be extracted from the Tate approach by truncating appropriately. 
Again, it does not seem worth repeating here. 

\subsection{Tate approach}
The cleanest approach is to use the Tate diagram
$$\diagram
k\R_h\rto \dto_{\simeq} &k\R \rto \dto &k\R^{\Phi}\dto \\
k\R_h \rto &k\R^h\rto &k\R^t
\enddiagram$$

The only real calculational input is the homotopy fixed points.
\begin{lemma}
The homotopy fixed point coefficients are 
$$k\R^{hQ}_{\rost}=BB [U,U^{-1}], $$
where $BB=\Z [a, \vb]/(2a, \vb a^3)\oplus 2U\cdot \Z[\vb]$ is the
Basic Block for $k\R$. Here $|a|=-\sigma$ as before and
$|\vb|=1+\sigma$; the basic block $BB$ is copied across the page by the periodicity operator
$U$ of degree  $ |U|=4-4\sigma$. 
\end{lemma}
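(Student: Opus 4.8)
The plan is to run the $RO(Q)$-graded homotopy fixed point spectral sequence of Subsection~\ref{subsec:ROQhfpss} for $k\R$, exactly as was done for $H\Zu$, but now with a two-variable input reflecting the Bott class. First I would record the input: non-equivariantly $k\R$ has underlying homotopy $ku_*=\Z[v]$ with $|v|=2$, but the relevant structure is the $C_2$-equivariant refinement in which the complex conjugation action is encoded. The key point is that $\pi_\rho^Q(k\R)$ contains a class $\vb$ of degree $1+\sigma=\rho$ whose underlying class is (a unit multiple of) the Bott class $v$, since $\rho$ restricts to $\R^2=\C$ non-equivariantly; this is the ``Real Bott class''. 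Then the $E_2$-term of the homotopy fixed point spectral sequence is $H^{-s}(Q;\pi_{t+b\sigma}(k\R))$, and because $Q$ acts on $\pi_*(k\R)=\Z[v]$ through conjugation — which sends $v\mapsto -v$ up to the sign $(-1)^b$ bookkeeping already built into the $b$-grading — one finds $E_2 = \Z[a,\vb]/(2a) \otimes \Z[U^{\pm 1}]$ where $a$ is the Euler class, $\vb$ has tridegree corresponding to $\rho$, and $U=\vb^2$ (of degree $2\rho=4-4\sigma$... wait, $2\rho = 2+2\sigma$, so rather $U$ of degree $4-4\sigma$ must be $u^2$ in the earlier notation and is the genuine periodicity). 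Concretely I would set up the spectral sequence starting from the $E_1$-page using the 2-periodic resolution of $\Z$ over $\Z Q$ as in the $H\Zu$ case, now with coefficients $\pi_{t+b\sigma}(k\R)$.

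Next I would compute the differentials. As for $H\Zu$, the first differential is forced by the fact that $Q$ acts by $-1$ on $\pi_{b\sigma}$-type classes for $b$ odd, giving $d\lambda = 2a$ on a square root $\lambda$ of $U$, hence $U$ survives. The genuinely new phenomenon for $k\R$ is a higher differential: there must be a $d_3$ (coming from the $k$-invariant relating $v$ and $\eta$ in $ku$, i.e. the relation $\eta v \neq 0$ non-equivariantly is not it — rather it is that $\vb$ supports a differential hitting $a^3$ times a lower class, reflecting that $\vb$ is only a fourth root of periodicity and not a square root after the first page). This is what produces the relation $\vb a^3 = 0$ and the appearance of the summand $2U\cdot\Z[\vb]$: the classes $\vb, \vb^2, \vb^3$ are only ``half there'' ($2$-divisible issues) or ``$a$-torsion'', while $\vb^4 = U^? $ — more precisely one gets that $\vb$ times $a^3$ dies, so $a$-multiples only live in a band of height $3$ off each square, which matches the picture. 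I would identify this differential by comparison with the known non-equivariant Atiyah–Hirzebruch / Adams differential in $ku$, or by naturality from the $H\Zu$ computation together with the unit map $k\R \to H\Zu$.

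Assembling the $E_\infty$-page and resolving extensions then gives $k\R^{hQ}_\rost = BB[U,U^{-1}]$ with $BB = \Z[a,\vb]/(2a,\vb a^3) \oplus 2U\cdot\Z[\vb]$: the first summand is the ``visible'' part generated by $a$ and $\vb$ modulo the forced relations, and the second summand $2U\cdot\Z[\vb]$ records that the would-be classes $U\vb^k$ are present but only their doubles are hit from the ``obvious'' generators, a multiplicative extension in the spectral sequence. The main obstacle is precisely pinning down this higher differential and the resulting multiplicative/additive extensions: the $E_2$-page is a routine group-cohomology computation, but getting from $E_2$ to the stated answer requires knowing that $\vb^3$ (rather than $\vb^2$) is the first power that is ``$a$-divisible-free'' and that the extension producing $2U\cdot\Z[\vb]$ is nontrivial. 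I would resolve this either by the comparison maps $k\R \to H\Zu$ and $k\R \to KU$-with-Reality (whose homotopy fixed points are classically $KO$-theoretic and fully known), or by citing the structure of $ku^{hC_2}$ from the literature (e.g.\ \cite{Dugger}); the Leibniz rule across the combined spectral sequences, emphasized in Subsection~\ref{subsec:ROQhfpss}, then propagates the single differential on $\vb$ to the whole pattern.
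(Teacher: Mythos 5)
Your overall strategy is the paper's: run the $RO(Q)$-graded homotopy fixed point spectral sequence from the standard $2$-periodic resolution coming from the filtration of $EQ_+$ by the $S(n\sigma)_+$, obtain $d_1\lambda=2a$ with $a$ and $\vb$ as cycles, and then identify one further differential as the single piece of genuinely topological input. But your description of that further differential is wrong in a way that matters. You say ``$\vb$ supports a differential hitting $a^3$ times a lower class.'' If $\vb$ supported a nonzero differential it could not survive the spectral sequence, contradicting the answer in which $\vb$ is a polynomial generator of $BB$. The correct statement is that the class $u=\lambda^2$ of degree $2-2\sigma$ --- the square root of the periodicity $U$, and not any power of $\vb$, since $|\vb^2|=2+2\sigma\neq 4-4\sigma$ --- supports the differential $d_2u=\vb a^3$, so that $\vb a^3$ is the \emph{target}. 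This single differential, propagated by the Leibniz rule, simultaneously produces the relation $\vb a^3=0$ (as an image), the survival of $U=u^2$, and the summand $2u\cdot \Z[\vb]$; the latter is not a multiplicative extension, as you suggest, but simply the kernel of $d_2$ on the odd powers of $u$, since $d_2(2u)=2\vb a^3=0$ because $2a=0$. Interchanging source and target here is not cosmetic: with your version the $E_\infty$-page would lose $\vb$ entirely and retain all of $\Z\cdot u$, giving the wrong answer.

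On how to pin the differential down: the paper uses the relation $\eta^4=0$ in $\pi_*(S^0)$, with $\eta=a\vb$; one checks that $a^4\vb^4=\vb^3a\cdot d_2u=d_2(\vb^3 a u)$ and that no other differential could kill $\eta^4$. Of your proposed alternatives, naturality along the unit map $\kR\lra H\Zu$ cannot detect this differential, because $\vb$ maps to zero and the corresponding differential in the $H\Zu$ spectral sequence vanishes; the comparison with periodic $K$-theory with Reality (whose homotopy fixed points are $KO$, with known coefficients) does work and is a legitimate substitute for the $\eta^4=0$ argument, as is citing Dugger. So the justification can be repaired, but the differential itself must first be stated with the correct source $u$ before any of these comparisons can be brought to bear.
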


\begin{remark}
Notice how this is related to ordinary group cohomology, which occurs
along the negative $x$-axis: 
$$\kR^{hQ}_{\leq 0}=\Z [Y]/(2Y), \mbox{ where } Y=a^4U^{-1}
\mbox{ is of codegree 4.} $$
The element $Y$ corresponds to $y^2$ from $H\Zu^{hQ}_*$ in Remark
\ref{rem:groupcohom}. 

Similarly $ko_*=(k\R)^{hQ}_{\geq 0}$ occurs along the positive $x$-axis,
with $\eta=a\vb$ generating $ko_1$, $2u\vb^2$ generating $ko_4$ and the
Bott element  $U \vb^4$ generating $ko_8$.
\end{remark}

\begin{proof}
We describe the $RO(Q)$-graded spectral sequence of Subsection
\ref{subsec:ROQhfpss} for $k\R$. In fact it
is convenient to describe it from the $E_1$-term if we choose a good
cell structure  on $EQ_+$.  Indeed we use the filtration 
$$S(\sigma)_+\subset  S(2\sigma)_+ \subset  S(3\sigma)_+\subset  \cdots \subset
S(\infty \sigma)_+=EQ_+. $$
There is one free $Q$-cell in each degree and in cellular homology this gives the standard 2-periodic resolution 
$$0\lla \Z \stackrel{\eps}\lla \Z Q \stackrel{1-x}\lla \Z Q \stackrel{1+x}\lla \Z Q
\stackrel{1-x}\lla \cdots $$
of $\Z$ over the group ring $\Z Q$.

We may then describe the $E^1$ term conveniently as
$$E^1_{*,*,*}=\Z [a, \vb] [\lambda , \lambda^{-1}]$$
where the $(s,t, b)$ gradings are 
$$|a|=(-1, 1, -1), |\lambda|=(0, 1, -1), |\vb|=(0, 1, 1). $$
The differential is given by making $a,\vb$ into $d_1$ cycles and 
$$d_1\lambda=2a.$$
This is clear from the algebraic resolution since we know $Q$ acts to
negate odd powers of $v$ and is of degree $-1$ on $S^{b\sigma}$ for
$b$ odd. 

This means that $u=\lambda^2$ is a $d_1$-cycle and we may calculate
$$E^2_{*,*,*}=\left\{ \Z [a]/(2a)\right\} [\vb] [u, u^{-1}]. $$
The next differential is the one piece of topological input. It is
forced by the fact that $\eta^4=0$ (from the stable homotopy of
$S^0$). 
The differential is given by making $a, \vb$ into cycles again and  
$$d_2u=\vb a^3. $$
This makes $U=u^2$ into a $d_2$-cycle and 
$$E^3=\left\{  \Z [a, \vb]/((2a, \vb a^3) \oplus (2u) \cdot \Z [\vb]\right\}
[U, U^{-1}],  $$
where the relations  $(2u)\cdot a=0$ and $(2u)^2=4u^2=4U$ are left implicit. 
Now we find that all subsequent differentials on $a, \vb $ or $U$ have
target in a zero group, so the spectral sequence has collapsed. 
\end{proof}

Because $S^{\infty \sigma}\simeq S^0[1/a]$ it is then easy to calculate Tate
cohomology by inverting $a$ and homotopy orbits as local cohomology. 

\begin{cor}
$$k\R^{tQ}_{\rost}=\Ftwo [a, a^{-1}] [U,U^{-1}], $$
$$k\R_{hQ}^{\rost}=NB [U,U^{-1}],   $$
where $NB=(2, \vb  ) \oplus \Sigma^{-1}\Ftwo [a]^{\vee}$ is the
Negative Block for $k\R$, more
conceptually described as the right derived functors of the $a$-power
torsion functor $\Gamma_{(a)}$: 
$$NB=R\Gamma_{(a)}(BB). $$
\end{cor}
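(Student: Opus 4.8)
\textbf{Setup.} The statement to prove is the Corollary computing $k\R^{tQ}_{\rost}$ and $k\R_{hQ}^{\rost}$ from the already-established $k\R^{hQ}_{\rost} = BB[U,U^{-1}]$. The plan is to follow the recipe outlined in Subsection 1.D: pass from homotopy fixed points to the Tate theory by inverting $a$, then read off homotopy orbits from the lower cofibre sequence of the Tate square as a local cohomology calculation.

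\textbf{Step 1: Tate cohomology by inverting $a$.} Since $\tilde{E}Q = S^{\infty\sigma}$ and $\pi^Q_{\rost}(Y \wedge S^{\infty\sigma}) = \pi^Q_{\rost}(Y)[1/a]$, we have $k\R^{tQ}_{\rost} = (k\R^{hQ}_{\rost})[1/a]$. So I would simply invert $a$ in $BB[U,U^{-1}] = \{\Z[a,\vb]/(2a,\vb a^3) \oplus 2U\cdot\Z[\vb]\}[U,U^{-1}]$. Inverting $a$ kills the $2U\cdot\Z[\vb]$ summand (each element is killed by some power of $a$ via $\vb a^3 = 0$ together with $a$-torsion $2a=0$, which forces the classes supported away from the $a$-tower to vanish — more precisely, once $a$ is inverted the relation $2a = 0$ gives $2 = 0$, and $\vb a^3 = 0$ gives $\vb = 0$), leaving $\Ftwo[a,a^{-1}][U,U^{-1}]$. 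The only care needed is checking that the $\vb$-direction and the $2U\cdot\Z[\vb]$ piece genuinely die: the relation $2 = 0$ after inverting $a$ makes $2U\cdot\Z[\vb]$ land in the characteristic-2 part, and $\vb = 0$ collapses the polynomial generator, so what survives is exactly $\Ftwo$ in each bidegree $a^i U^j$.

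\textbf{Step 2: Homotopy orbits via the lower cofibre sequence.} The bottom row of the Tate square is the cofibre sequence $k\R_h \to k\R^h \to k\R^t$, so on $RO(Q)$-graded homotopy there is a long exact sequence relating $k\R_{hQ}^{\rost}$, $k\R^{hQ}_{\rost}$, and $k\R^{tQ}_{\rost}$. This is precisely the local cohomology situation: the map $BB[U,U^{-1}] = k\R^{hQ}_{\rost} \to k\R^{tQ}_{\rost} = \Ftwo[a,a^{-1}][U,U^{-1}]$ is the $a$-localization map, whose kernel and cokernel assemble into $R\Gamma_{(a)}$ of $BB[U,U^{-1}]$ shifted appropriately. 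Concretely, $k\R_{hQ}^{\rost}$ fits in $0 \to \Gamma_{(a)}(k\R^{hQ}_{\rost}) \to k\R_{hQ}^{\rost} \to \Sigma^{-1}(\text{coker of } a\text{-localization}) \to 0$. The $a$-power-torsion submodule $\Gamma_{(a)}(BB)$ is exactly $(2,\vb) \oplus 2U\cdot\Z[\vb]$ — wait, more carefully: the $a$-torsion in $BB = \Z[a,\vb]/(2a,\vb a^3) \oplus 2U\Z[\vb]$ consists of everything annihilated by a power of $a$, which is the ideal $(2,\vb)$ inside $\Z[a,\vb]/(2a,\vb a^3)$ together with the whole $2U\Z[\vb]$ summand, giving the ``$(2,\vb)$'' part of $NB$. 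The cokernel of $BB[U,U^{-1}] \to \Ftwo[a,a^{-1}][U,U^{-1}]$ is concentrated in negative $a$-degrees and is a shifted $\Ftwo[a]^\vee$, contributing the $\Sigma^{-1}\Ftwo[a]^\vee$ summand. Tensoring everything up with $\Z[U,U^{-1}]$ (which is flat) gives $k\R_{hQ}^{\rost} = NB[U,U^{-1}]$ with $NB = (2,\vb) \oplus \Sigma^{-1}\Ftwo[a]^\vee = R\Gamma_{(a)}(BB)$.

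\textbf{Main obstacle.} The routine part is mechanical once one trusts the local cohomology package; the one genuinely delicate point is identifying the \emph{module structure} and the precise degree shifts in $NB$ — i.e., pinning down that the cokernel term is $\Sigma^{-1}\Ftwo[a]^\vee$ (with the correct internal grading over $RO(Q)$, not just $\Z$) and that it glues to $(2,\vb)$ in the stated $BB$-module fashion. I would handle this by tracking the $RO(Q)$-degrees of the $a$-tower generators explicitly through the localization map (the bottom $a$-column in degrees $\le -3$, matching the picture), and by invoking the general fact that for a Noetherian graded ring the local cohomology of a module in the principal-ideal case is computed by the two-term stable Koszul complex $M \to M[1/a]$, so that $\Gamma_{(a)}(M) = H^0$ and the cokernel is $\Sigma^{?}H^1$; checking the shift is ``$-1$'' is a one-line degree count. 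The statement $NB = R\Gamma_{(a)}(BB)$ is then a reformulation rather than an additional claim.
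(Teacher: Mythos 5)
Your proposal is correct and follows exactly the route the paper intends: the paper offers no separate proof of this corollary beyond the one-line remark that Tate cohomology is obtained by inverting $a$ (since $S^{\infty\sigma}\simeq S^0[1/a]$) and homotopy orbits as local cohomology via the bottom cofibre sequence, which is precisely your Steps 1 and 2. Your explicit verification that inverting $a$ forces $2=0$ and $\vb=0$, and your identification of $\Gamma_{(a)}(BB)=(2,\vb)$ with the cokernel contributing the shifted $\Ftwo[a]^{\vee}$, supply the details the paper leaves implicit.
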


For geometric fixed points we apply  Lemma \ref{lem:QcoverPhiQcover}.

\begin{lemma}
The geometric fixed points are given by $k\R^{\Phi Q}_{\rost}=\Ftwo [a,a^{-1}][U]$.
\end{lemma}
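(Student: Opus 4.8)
The plan is to follow verbatim the strategy used for $H\Zu$: identify $\kR^{Q}\to\kR^{hQ}$ as a connective cover and then invoke Lemma~\ref{lem:QcoverPhiQcover}. By definition of $\kR$ one has $\kR^{Q}=ko$, which is connective, so the point is to check that the comparison map $ko_{*}=\kR^{Q}_{*}\to\kR^{hQ}_{*}$ is an isomorphism in non-negative degrees. This is visible from the preceding lemma: the part of $BB[U,U^{-1}]$ in non-negative integer degrees lies along the positive $x$-axis and is exactly $ko_{*}$ --- with $\eta=a\vb$ in degree $1$, the generator $2u\vb^{2}$ in degree $4$, and the Bott class $U\vb^{4}$ in degree $8$ --- and by construction the comparison map is the edge homomorphism of the spectral sequence of Subsection~\ref{subsec:ROQhfpss}, which is exactly the identification just used. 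Hence $\kR^{Q}\to\kR^{hQ}$ is the connective cover.

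Lemma~\ref{lem:QcoverPhiQcover} then gives that $\kR^{\Phi Q}\to\kR^{tQ}$ is the connective cover as well. To read off the answer, note that $\kR^{\Phi Q}=\kR\sm\tilde{E}Q$ and $\kR^{tQ}$ are both $\tilde{E}Q=S^{\infty\sigma}$-modules, so multiplication by $a$ is invertible on each; hence $\kR^{\Phi Q}_{x+y\sigma}\cong\kR^{\Phi Q}_{x}$ for every $y$, and likewise for $\kR^{tQ}$, so each theory is recovered from its integer-graded part by adjoining $a^{\pm1}$. The integer-graded part of $\kR^{tQ}_{\rost}=\Ftwo[a,a^{-1}][U,U^{-1}]$ is $\Ftwo[b,b^{-1}]$ with $b=a^{-4}U$ in degree $4$; its connective cover keeps precisely the non-negative powers of $b$, i.e.\ of $U$. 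Adjoining $a^{\pm1}$ back gives $\kR^{\Phi Q}_{\rost}=\Ftwo[a,a^{-1}][U]$.

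The only delicate point is the claim that $ko_{*}\to\kR^{hQ}_{*}$ is an isomorphism in non-negative degrees rather than the two sides merely being abstractly isomorphic groups; this is obtained by tracking the edge homomorphism through the collapse computation in the proof of the homotopy-fixed-point lemma, and requires no new topological input --- the only topology there, $\eta^{4}=0$ forcing $d_{2}u=\vb a^{3}$, has already been used. Everything else is formal manipulation of the Tate square.
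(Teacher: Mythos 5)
Your second paragraph (recovering $\kR^{\Phi Q}_{\rost}$ from $\kR^{tQ}_{\rost}$ via $a$-periodicity once the connective-cover statement is in hand) is fine and is exactly how the paper intends Lemma~\ref{lem:QcoverPhiQcover} to be used. The gap is in the first paragraph, in the step you yourself flag as ``the only delicate point''. The map $\kR^{Q}\to\kR^{hQ}$ is \emph{not} an edge homomorphism of the $RO(Q)$-graded homotopy fixed point spectral sequence: that spectral sequence converges to $\pi_*(\kR^{hQ})$, and its edge map is the restriction $\pi_*(\kR^{hQ})\to\pi_*(\kR)=ku_*$, which factors the comparison from genuine fixed points but does not determine it. Knowing that both $\kR^{Q}_*$ and the non-negative part of $\kR^{hQ}_*$ are abstractly $ko_*$ says nothing about whether the canonical map between them is an isomorphism; for instance it does not rule out that a genuine class dies on the classes $\eta=a\vb$, $\eta^2$ in positive filtration, which are invisible to the restriction to $ku_*$. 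So the claim that ``no new topological input'' is needed is exactly where the argument fails. (The assertion $\kR^{Q}=ko$ ``by definition'' is also doing more work than it can: the paper only takes $KO=(K\R)^Q$ as given by construction, for the \emph{periodic} theory.)

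The paper closes this gap by passing through $K\R$: since $a^3\vb=0$ and $\vb$ is invertible in $(K\R)^{hQ}_{\rost}$, the class $a$ is nilpotent there, so $(K\R)^{tQ}\simeq *$ and the Tate square gives $(K\R)^{Q}\simeq(K\R)^{hQ}$; comparing the homotopy fixed point spectral sequences of $K\R$ and $k\R$ then shows $(k\R)^Q\to(k\R)^{hQ}$ is the connective cover. Some such genuinely equivariant input identifying the comparison map is unavoidable, and your proposal as written does not supply it.
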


\begin{proof}
By construction $KO=(K\R)^Q$. Since $(K\R)^{tQ}\simeq *$ (by
nilpotence of $a$ in $(K\R)^{hQ}_{\rost}$) we see
$(K\R)^Q\simeq (K\R)^{hQ}$. Now the behaviour of the homotopy fixed
point spectral sequence for $K\R$ shows that of $k\R$ and hence that 
$(k\R)^Q\lra (k\R)^{hQ}$ is the connective cover. We may therefore
apply Lemma \ref{lem:QcoverPhiQcover} to obtain the conclusion. 
\end{proof}

Finally, we return to the Tate square, and read off the conclusion:
the only difference between $k\R^Q_{\rost}$ and the more regular 
$k\R^{hQ}_{\rost}$ comes from the negative Tate $a$-columns.

\begin{cor}
The $RO(Q)$-graded homotopy of $k\R$ is as follows
$$k\R^Q_{\rost}=BB[U]\oplus U^{-1}\cdot 
NB [U^{-1}],  $$
where the action of $U$ across the boundary 
$$U^{-1} \cdot NB \lra BB$$
factors out the $H^1_{(a)}$ towers and includes the $a$-power
torsion in $BB$.
\end{cor}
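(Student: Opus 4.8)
The plan is to continue the Tate-square computation exactly as for $H\Zu$ in Section~\ref{sec:HZ}, now that all four corners are in hand: $\kR^{hQ}_{\rost}=BB[U,U^{-1}]$, $\kR^{tQ}_{\rost}=\Ftwo[a,a^{-1}][U,U^{-1}]$, $\kR^{\Phi Q}_{\rost}=\Ftwo[a,a^{-1}][U]$, and $\kR_{hQ}^{\rost}=NB[U,U^{-1}]$ with $NB=R\Gamma_{(a)}(BB)=\Gamma_{(a)}(BB)\oplus\Sigma^{-1}H^1_{(a)}(BB)$. Because the Tate square is a homotopy pullback, its Mayer--Vietoris sequence gives, in each $RO(Q)$-degree $\gamma$, a short exact sequence
$$0\lra \mathrm{coker}\big(\kR^{hQ}_{\gamma+1}\oplus\kR^{\Phi Q}_{\gamma+1}\lra\kR^{tQ}_{\gamma+1}\big)\lra\kR^Q_{\gamma}\lra\ker\big(\kR^{hQ}_{\gamma}\oplus\kR^{\Phi Q}_{\gamma}\lra\kR^{tQ}_{\gamma}\big)\lra 0,$$
in which $\gamma+1$ means raising the integer part by one; so it remains to identify the two structure maps and compute this kernel and cokernel. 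First, the map $\alpha:\kR^{hQ}_{\rost}\lra\kR^{tQ}_{\rost}$ is inversion of $a$: since $\tilde{E}Q=S^{\infty\sigma}=S^0[1/a]$ we have $\kR^{tQ}_{\rost}=\kR^{hQ}_{\rost}[1/a]$, and this localization is $\Z[U,U^{-1}]$-linear, restricting on each power of $U$ to the canonical map $BB\lra BB[1/a]=\Ftwo[a,a^{-1}]$, with kernel the $a$-power torsion $\Gamma_{(a)}(BB)$ and cokernel $H^1_{(a)}(BB)$. Second, the map $\beta:\kR^{\Phi Q}_{\rost}\lra\kR^{tQ}_{\rost}$ is the connective cover (Lemma~\ref{lem:QcoverPhiQcover}), hence the inclusion of the summand $\Ftwo[a,a^{-1}][U]$ of non-negative $U$-powers; in particular it is an isomorphism in $U$-degrees $\geq 0$, while in $U$-degrees $< 0$ both its source and its target-quotient are zero.

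Feeding this in, one reads the Mayer--Vietoris sequence $U$-column by $U$-column. In gradings of non-negative $U$-power the connective-cover map $\beta$ is onto, so $\kR^Q_{\gamma}$ is the honest pullback of $RO(Q)$-graded groups and, since $\beta$ is even an isomorphism there, it agrees with $\kR^{hQ}_{\gamma}$; these columns assemble to the summand $BB[U]$. In gradings of negative $U$-power $\kR^{\Phi Q}_{\rost}$ vanishes, the pullback collapses, and $\kR^Q_{\gamma}$ becomes an extension of $\ker(\alpha)_{\gamma}=\Gamma_{(a)}(BB)$ by $\mathrm{coker}(\alpha)_{\gamma+1}=H^1_{(a)}(BB)$, the cokernel contribution entering through the degree-lowering boundary map; recognising $\Gamma_{(a)}(BB)\oplus\Sigma^{-1}H^1_{(a)}(BB)=NB$ and assembling over the negative columns (with the internal shifts dictated by the boundary map) gives the summand $U^{-1}\cdot NB[U^{-1}]$. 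This is the asserted decomposition, and it also exhibits the action of $U$ across the boundary: $U$ carries $U^{-1}\cdot NB$ into the $U^0$-piece $BB$; on the $\Gamma_{(a)}(BB)$-summand of $NB$, which already sits inside $\kR^{hQ}_{\rost}=BB[U,U^{-1}]$ where $U$ acts literally, this is the inclusion of the $a$-power torsion of $BB$, whereas the $\Sigma^{-1}H^1_{(a)}(BB)$-towers are cokernel classes and hence map to $0$ in $BB$ --- i.e.\ $U$ ``factors out the $H^1_{(a)}$ towers and includes the $a$-power torsion''.

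I expect the main obstacle to be exactly the point that is already the crux for $H\Zu$: showing that the output is a genuine direct sum --- that the extension of $\Gamma_{(a)}(BB)$ by $\Sigma^{-1}H^1_{(a)}(BB)$ in the negative columns is split --- and, in tandem, pinning down the precise internal grading of the $NB$-summand in the answer. Both are to be settled by the same grading bookkeeping as in Section~\ref{sec:HZ}: the homotopy pullback is an honest pullback of $RO(Q)$-graded groups in the gradings carrying $BB[U]$ and the torsion $\Gamma_{(a)}(BB)$ of $NB$ in the negative columns, while the $a$-divisible towers $\Sigma^{-1}H^1_{(a)}(BB)$ appear only through the connecting homomorphism, one integer degree below the negative Tate columns, so that --- just as in Section~\ref{sec:HZ} --- the two families of contributions occupy disjoint $RO(Q)$-gradings and do not interact. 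Just as $\Sigma^{-1}\Ftwo[a]^{\vee}$ in $H\Zu_{hQ}^{\rost}$ is replaced by $\Sigma^{-\rho}\Ftwo[a]^{\vee}$ in $H\Zu^Q_{\rost}$, the cokernel summand here carries the shift supplied by the connecting homomorphism, not the grading $NB$ has inside $\kR_{hQ}^{\rost}$; and throughout one only has to check that $\alpha$ and the boundary map respect the relations $2a=0$, $\vb a^3=0$, $2U\cdot a=0$ and $(2u)^2=4U$ defining $BB$, so that no new relations are introduced.
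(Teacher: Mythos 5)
Your overall route is the paper's: run the Mayer--Vietoris sequence of the Tate pullback, identify $\alpha\colon \kR^{hQ}_{\rost}\lra\kR^{tQ}_{\rost}$ as the localization inverting $a$ (kernel $\Gamma_{(a)}$, cokernel $H^1_{(a)}$) and $\beta\colon\kR^{\Phi Q}_{\rost}\lra\kR^{tQ}_{\rost}$ as the inclusion of non-negative $U$-powers, then read off $BB[U]$ from the columns where $\beta$ is an isomorphism and an extension of $\Gamma_{(a)}(BB)$ by a shifted $H^1_{(a)}(BB)$ in each negative column. (One aside: your sentence that in negative $U$-degrees the ``target-quotient'' of $\beta$ vanishes is backwards --- the cokernel of $\beta$ is exactly the negative part --- but your subsequent use of $\mathrm{coker}(\alpha)$ there is correct, so this is only a slip of wording.)

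The genuine gap is your resolution of the extension problem. You argue that the kernel and cokernel contributions occupy disjoint $RO(Q)$-gradings ``just as in Section~\ref{sec:HZ}''. For $H\Zu$ this is true because $|u|=2-2\sigma$ confines $BB[u,u^{-1}]$ to even integer degrees while the $H^1_{(a)}$-towers sit in odd ones. For $\kR$ it is false: $|\vb|=1+\sigma$ has odd integer part, so $\Gamma_{(a)}(BB)\cdot U^{j}$ spreads over all integer degrees $\geq 4j$. Concretely, $\vb^{3}a^{2}U^{-2}$ is a nonzero $2$-torsion class of $\Gamma_{(a)}(BB)\cdot U^{-2}$ in degree $-5+9\sigma$, and the $a$-divisible tower attached to the $U^{-1}$-column also has a class in degree $-5+9\sigma$; both are visible in the chart at the start of Section~\ref{sec:kR}, where the slope-one line through $\vb a^{2}U^{-2}$ crosses the infinite tower above $2U^{-1}$. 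In such degrees you face an honest extension of $\Ftwo$ by $\Ftwo$ and must rule out $\Z/4$. This can be done --- every such overlapping kernel class is $a^{i}x$ with $i\geq 1$ and $x=\vb^{k}U^{j}$ itself liftable to $\kR^{Q}_{\rost}$, so its lift is annihilated by $2$ because $2a=0$ already holds in $\pi^{Q}_{-\sigma}(\kR)$ --- but some such argument is required; the disjoint-support argument does not transfer from $H\Zu$, and the same point already affects the splitting $NB=(2,\vb)\oplus\Sigma^{-1}\Ftwo[a]^{\vee}$ in the homotopy-orbit corollary that you quote as input.
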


\subsection{The Bockstein spectral sequence}
Dugger \cite{Dugger} has shown that there is a cofibre sequence
$$\Sigma^\rho k\R \stackrel{\vb}\lra \kR \lra H\Zu  $$
where $\rho=1+\sigma$. (This can be proved by noting the homotopy
fixed point spectral sequence gives an element $\vb$, and slice
filtration methods easily show the mapping cone is $H\Zu$).
We  use the Dugger sequence  to calculate $\kR^Q_{\rost}$ from $H\Zu^Q_{\rost}$ by the Bockstein
spectral sequence.  This spectral sequence happens to coincide with
the slice spectral sequence for $k\R$, as we make explicit in
Subsection \ref{subsec:slice},  but we will not
make use of this. 

Accordingly (using the conventions of \cite[Section 4.1]{kobg}) we consider the spectral sequence obtained by taking
$RO(Q)$-graded homotopy of the diagram 
$$\diagram 
\kR \dto &\Sigma^{\rho} \kR \lto \dto &\Sigma^{2\rho} \kR \lto \dto
&\Sigma^{3\rho} \kR \lto \dto &\lto \cdots\\
\Z & \Sigma^{\rho}H\Zu & \Sigma^{2\rho}H\Zu & \Sigma^{3\rho}H\Zu&
\enddiagram$$
to obtain a spectral sequence
$$E^1=H\Zu^Q_{\rost}[\vbh] \Rightarrow \kR^Q_{\rost}$$
where $\vbh$ is a formal variable of bidegree $(1, \rho)$. Since we
have already calculated $H\Zu^Q_{\rost}$ this is straightforward once
we have found a reasonable way to display the information. The
standard thing to do in the $\Z$-graded case (for $\Sigma ko \lra ko
\lra ku $ for example) is that for each row of
constant $s$ we write $ku_*$ horizontally, shifted to the right by $s$,  so that
the differential $d_r$ takes on the Adams form of going up $r$ rows
and one column to the left (i.e., subtracting 1).  

In our present context, for each fixed $s$ we have a whole
$RO(Q)$-plane. Accordingly we will refer to this counterpart of the 
$s$th row as the $s$th {\em floor}.  Once again we
arrange that $d^r$ goes up from the $s$th floor to the $(s+r)$th
floor. If we place $\pi^Q_{\rost}(\Sigma^{s\rho}H\Zu)$ on the $s$th floor, $d^r$ again
subtracts 1: symbolically, for any $\alpha \in
RO(Q)$, 
$$d^r: H\Zu^Q_{\alpha }\lra  (\Sigma^{r\rho}H\Zu)^Q_{\alpha
  -1}=H\Zu^Q_{\alpha -r\rho -1}=H\Zu^Q_{\alpha -r-1-r\sigma}
$$
The value on $x$ is defined by finding the  image of $x$  in $\kR^Q_{\alpha-1}$,
dividing by $\vb^{r-1}$ and looking at the image in $H\Zu^Q_{\rost}$
again.  We need only determine it in  a few cases and then propogate
this using the multiplicative structure.

For those with old-fashioned habits, it is natural to arrange the
calculation conventionally in the upper half-plane with the box at
$(n,s)$ containing $H\Zu^Q_{n-s+*\sigma}$, then $d^r$ relates $(n,s)$
to  $(n-1, s+r)$ and reduces the $\sigma$-grading by $r$. We then 
 find the value of $ku^Q_{n+*\sigma}$ by adding up the terms for fixed  $n$

On the other hand this places a strain on visualization, and an
alternative is to pile copies of $H\Zu^Q_{\rost}$ vertically above
each other and look from above. Each $H\Zu^Q_{a+b\sigma}$ that you see
then represents a whole pile,   $H\Zu^Q_{a+b\sigma}[\vbh]$, of copies.
 Then the differential $d^r$ raises the floor in the pile by $r$ and 
as indicated above it reduces $a+b\sigma$ by $(r+1)+r\sigma$. Finding the
answer then involves adding up entries from different stacks with
total degree constant. 

We illustrate this for $d^1$, which has degree $-2-\sigma$: 
$$\begin{tikzpicture}
[scale =1.2]
\clip (-5, -5.5) rectangle (5, 6);
\draw[step=0.5, gray, very thin] (-9,-9) grid (9, 9);
\draw[blue, ->, thick](2,0)--(4.8, 0);
\draw (4.4, 0) node[anchor=south]{$\Z\cdot 1$};

\draw[blue, ->, thick](0,2)--(0, 4.8);
\draw (0,4.4) node[anchor=west]{$\Z\cdot \sigma$};

\draw (-0.7, -2.5) node[anchor=east,
draw=orange]{\large{$H\Zu^Q_{\rost}$\\ with the $\vb$-BSS $d^1$}};

\node at (0,0)  [shape = rectangle, draw]{};
\draw (0,0) node[anchor=west]{$1$};
\draw[red](0,0)--(0, -6);
\foreach \y in {1, 2, 3,4,5,6,7,8,9,10,11,12,13,14,15}
\draw (0,-\y/2) node[anchor=east] {$a^{\y}$};
\foreach \y in {1,2,3,4,5,6,7,8,9,10,11,12,13,14,15}
\node at (0,-\y/2) [fill=red, inner sep=1pt, shape=circle, draw] {};

\node at (1,-1)  [shape = rectangle, draw]{};
\draw (1, -1) node[anchor=west]{$u$};
\draw[red](1,-1)--(1, -6);
\foreach \y in {1,2,3,4,5,6,7,8,9,10,11,12,13,14,15}
\node at (1,-1-\y/2) [fill=red, inner sep=1pt, shape=circle, draw] {};
\foreach \y in {0, 1,2,3,4,5,6,7,8}
\draw[->, red](1,-5.0+\y/2)--(0.1, -5.4+\y/2);

\node at (2,-2)  [shape = rectangle, draw]{};
\draw (2, -2) node[anchor=west]{$u^2$};
\draw[red](2,-2)--(2, -6);
\foreach \y in {1,2,3,4,5,6,7,8,9,10,11,12,13,14,15}
\node at (2,-2-\y/2) [fill=red, inner sep=1pt, shape=circle, draw] {};

\node at (3,-3)  [shape = rectangle, draw]{};
\draw (3, -3) node[anchor=west]{$u^3$};
\draw[red](3,-3)--(3, -6);
\foreach \y in {1,2,3,4,5,6,7,8,9,10,11,12,13,14,15}
\node at (3,-3-\y/2) [fill=red, inner sep=1pt, shape=circle, draw] {};
\foreach \y in {0, 1,2,3,4,5,6,7,8}
\draw[->, red](3,-7+\y/2)--(2.1, -7.4+\y/2);

\node at (4,-4)  [shape = rectangle, draw]{};
\draw (4, -4) node[anchor=west]{$u^4$};
\draw[red](4,-4)--(4, -6);
\foreach \y in {1,2,3,4,5,6,7,8,9,10,11,12,13,14,15}
\node at (4,-4-\y/2) [fill=red, inner sep=1pt, shape=circle, draw] {};

\node at (5,-5)  [shape = rectangle, draw]{};
\draw (5, -5) node[anchor=west]{$u^5$};
\draw[red](5,-5)--(5, -6);
\foreach \y in {1,2,3,4,5,6,7,8,9,10,11,12,13,14,15}
\node at (5,-5-\y/2) [fill=red, inner sep=1pt, shape=circle, draw] {};
\foreach \y in {0, 1,2,3,4,5,6,7,8}
\draw[->, red](5,-9+\y/2)--(4.1, -9.4+\y/2);

\node at (-1,1)  [shape = circle, draw]{};
\draw (-1,1) node[anchor=north]{$2u^{-1}$};
\draw[red](-1.5, 1.5) -- (-1.5, 6);
\foreach \y in {0, 1,2,3,4,5,6,7,8}
\node at (-1.5,1.5+\y/2) [fill=red, inner sep=1pt, shape=circle, draw] {};
\foreach \y in {0, 1,2,3,4,5,6,7,8}
\draw[->, red](-1.5,3+\y/2)--(-2.4, 2.6+\y/2);

\node at (-2,2)  [shape = circle, draw]{};
\draw (-2,2) node[anchor=north]{$u^{-2}$};
\draw[red](-2.5, 2.5) -- (-2.5, 6);
\foreach \y in {0, 1,2,3,4,5,6,7,8}
\node at (-2.5,2.5+\y/2) [fill=red, inner sep=1pt, shape=circle, draw] {};

\node at (-3,3)  [shape = circle, draw]{};
\draw (-3,3) node[anchor=north]{$2u^{-3}$};
\draw[red](-3.5, 3.5) -- (-3.5, 6);
\foreach \y in {0, 1,2,3,4,5,6,7,8}
\node at (-3.5,3.5+\y/2) [fill=red, inner sep=1pt, shape=circle, draw]
{};
\foreach \y in {0, 1,2,3,4,5,6,7,8}
\draw[->, red](-3.5,5+\y/2)--(-4.4, 4.6+\y/2);

\node at (-4,4)  [shape = circle, draw]{};
\draw (-4,4) node[anchor=north]{$2u^{-4}$};
\draw[red](-4.5, 4.5) -- (-4.5, 6);
\foreach \y in {0, 1,2,3,4,5,6,7,8}
\node at (-4.5,4.5+\y/2) [fill=red, inner sep=1pt, shape=circle, draw] {};
\end{tikzpicture}$$
\newcommand{\thb}{\overline{\theta}}
To justify the displayed $d^1$, note that it is induced by a map $\theta :
H\Zu\lra \Sigma^{2+\sigma}H\Zu$, which is the analogue of the integral
lift of $Sq^3$ in the non-equivariant setting. The stable operations
of $H\Zu$ are relatively complicated, but in our case it suffices to
observe that $\theta $ induces a map $\thb: b\lra \Sigma^3 b$ where $b=F(EQ_+,
H\Z/2)$ represents mod 2 Borel cohomology. The stable operations in
this case are easy to determine: we immediately see $b^*_Qb=H^*(BQ_+; \Z/2)\tensor
\mathcal{A}^*$ (the operations are studied further in \cite{cc}). In
any case,  we see that for representations $V$
$$\theta: H^n_Q(S^V; \Zu)= H\Zu^Q_{V-n}\lra
H\Zu^Q_{V-n-2-\sigma}=H^{n+2}_Q(S^{V-\sigma})$$
is 
$$\theta: H^{n-1}_Q(S(V); \Zu)\lra H^{n+1}_Q(S(V-\sigma);\Zu)$$
for $n\geq 2$ and provided $V$ contains $\sigma$. When $n-1$ is even,
reduction mod $2$ is an isomorphism and this agrees with 
$$\thb: b^{n-1}_Q(S(V))\lra b^{n+1}_Q(S(V-\sigma))=b^{n+2}_Q(S(V)). $$
Such an operation is a linear combination  $\lambda Sq^3+\mu Sq^2Sq^1$.
We need only argue that $\lambda \neq 0$, since this forces $\mu=0$
(so as to get $(d^1)^2=0$).  There are several approaches to understanding
this operation. The first is to argue from the fact that  $\eta^4=0$
(this involves  looking at the whole BSS as displayed in Figure 1 below).
The second is to study operations sufficiently to see
the behaviour of $d^1$ is forced by the non-equivariant
situation. Perhaps the most elementary  is
to begin by understanding the situation in geometric fixed points
(i.e., after inverting $a$).  After the fact, we know that the cofibre sequence 
$$\Sigma \kR^{\Phi Q}=(\Sigma^{\rho}\kR)^{\Phi Q}\lra \kR^{\Phi Q}\lra H\Zu^{\Phi Q}$$
has $H\Zu^{\Phi Q}_*=\Ftwo [x]$ where $x$ is of degree 2
and  $\kR^{\Phi Q}_*=\Ftwo [x^2]$. However, we do not need to use this full analysis of
the cofibre sequence to  see that the operation is
non-trivial from the 2 column to the 0 column:  we only need to know 
that $\kR^{\Phi Q}_1=0$, which is easily checked from the Dugger
sequence  and known values of $H\Zu^Q_{\rost}$.  This then determines
the operation, which gives all the values. Using any one of these
methods  shows that  $d^1$ is as displayed.


We could now jump straight to  displaying $E^2=E^{\infty}$. 
One simply notes that the $s=0$ line consists of kernels of $d^1$ and
each of the higher rows consists of the homology (with the $s$th row
shifted $s$ steps to the right).  

Alternatively, instead of dealing with $d^1$ in piles, we could have used 
the traditional display, and we will describe this next. 
 Each box for constant $(n,s)$ contains a group graded on multiples of
$\sigma$. 
Because of the almost-periodicity of $H\Zu^Q_{\rost}$ of $2(1-\sigma)$
and for typographical reasons we have arranged that $n(1-\sigma)$ is 
the midpoint of the $(n,s)$-box. In other words, the element $x$ of
$(n,s)$-degree $(2,0)$ is represented by horizontal translation and
corresponds  to $u$ of degree $2(1-\sigma)$. 

We will now display the $E^1$-term in Figure 1 and then the $E^2=E^{\infty}$
term in Figure 2. The squares and circles denote copies of $\Z$ and the dots
denote copies of $\Ftwo$. The larger (pale blue) blobs pick out the
slice spectral sequence as detailed in the next section. 

\begin{figure}
\begin{tikzpicture}
[scale =1.2]
\clip (-5, -5.5) rectangle (10, 6);
\draw[step=0.25, gray, very thin] (-9,-9) grid (15, 9);

\draw[blue, ->, line width=0.7mm](-10,-4.5)--(10, -4.5);
\draw (9.2, -4.5) node[anchor=north]{$n$};
\draw[blue, thick](-10,-2)--(10, -2);
\draw[blue, thick](-10,0.5)--(10, 0.5);
\draw[blue, thick](-10,3.0)--(10, 3.0);
\draw[blue, thick](-10,5.5)--(10, 5.5);
\draw (-3.5,-4.5) node[anchor=north]{$-6$};
\draw (-1.5,-4.5) node[anchor=north]{$-4$};
\draw (0.5,-4.5) node[anchor=north]{$-2$};
\draw (2.5,-4.5) node[anchor=north]{$0$};
\draw (4.5,-4.5) node[anchor=north]{$2$};
\draw (6.5,-4.5) node[anchor=north]{$4$};
\draw (8.5,-4.5) node[anchor=north]{$6$};

\draw[blue, ->, line  width=0.7mm](2,-4.5)--(2, 6);
\draw (2,5.5) node[anchor=south east]{$s$};
\draw[blue,  thick](-5,-4.5)--(-5, 6);
\draw[blue,  thick](-4,-4.5)--(-4, 6);
\draw[blue,  thick](-3,-4.5)--(-3, 6);
\draw[blue,  thick](-2,-4.5)--(-2, 6);
\draw[blue,  thick](-1,-4.5)--(-1, 6);
\draw[blue,  thick](0,-4.5)--(0, 6);
\draw[blue,  thick](1,-4.5)--(1, 6);
\draw[blue,  thick](3,-4.5)--(3, 6);
\draw[blue,  thick](4,-4.5)--(4, 6);
\draw[blue,  thick](5,-4.5)--(5, 6);
\draw[blue,  thick](6,-4.5)--(6, 6);
\draw[blue,  thick](7,-4.5)--(7, 6);
\draw[blue,  thick](8,-4.5)--(8, 6);
\draw[blue,  thick](9,-4.5)--(9, 6);

\foreach \y in {-1}
\node at (10.5 +\y, 2.75 + 2.25 * \y ) [circle, draw=blue!50,
fill=blue!20] {}; 

\foreach \y in {-2, -1, 0, 1}
\node at (8.5 +\y, 2.25 + 2.25 * \y ) [circle, draw=blue!50,
fill=blue!20] {}; 

\foreach \y in {-2, -1, 0, 1}
\node at (6.5 +\y, 1.75 + 2.25 * \y ) [circle, draw=blue!50,
fill=blue!20] {}; 

\foreach \y in {0, 1, 2 ,3}
\node at (2.5 +\y, -3.25 + 2.25* \y ) [circle, draw=blue!50,fill=blue!20] {}; 

\foreach \y in {0, 1, 2,3}
\node at (0.5 +\y, -3.75 + 2.25 * \y ) [circle, draw=blue!50, fill=blue!20] {}; 




\draw [->, thick] (2.5,-3.25)-- (2.5, -4.5);
\node at (2.5,-3.25)  [shape = rectangle, draw]{};
\draw (2.5,-3.25) node[anchor=east]{$1$};

\draw [->, thick] (4.5,-3.25)-- (4.5, -4.5);
\node at (4.5,-3.25)  [shape = rectangle, draw]{};
\draw (4.5,-3.25) node[anchor=east]{$x$};
\draw [red, ->, thick] (4.5,-3.25)-- (3.55, -1.6);

\draw [->, thick] (6.5,-3.25)-- (6.5, -4.5);
\node at (6.5,-3.25)  [shape = rectangle, draw]{};
\draw (6.5,-3.25) node[anchor=east]{$x^2$};

\draw [->, thick] (8.5,-3.25)-- (8.5, -4.5);
\node at (8.5,-3.25)  [shape = rectangle, draw]{};
\draw (8.5,-3.25) node[anchor=east]{$x^3$};
\draw [red, ->, thick] (8.5,-3.25)-- (7.55, -1.6);

\foreach \x in {0,1,2,3}
\foreach \y in {0, 1, 2, 3, 4}
\node at (2.5 +2*\x, -3.5 -0.25 * \y )[fill=red, inner sep=1pt, shape=circle, draw]{};

\draw [->, thick] (3.5,-0.75)-- (3.5, -2.0);
\node at (3.5,-0.75)  [shape = rectangle, draw]{};
\draw (3.5, -0.75) node[anchor=east]{$\vbh$};

\draw [->, thick] (5.5,-0.75)-- (5.5, -2.0);
\node at (5.5,-0.75)  [shape = rectangle, draw]{};
\draw (5.5,-0.75) node[anchor=east]{$\vbh x$};
\draw [red, ->, thick] (5.5,-0.75)-- (4.55, 1.0);

\draw [->, thick] (7.5,-0.75)-- (7.5, -2.0);
\node at (7.5,-0.75)  [shape = rectangle, draw]{};
\draw (7.5,-0.75) node[anchor=east]{$\vbh x^2$};

\draw [->, thick] (9.5,-0.75)-- (9.5, -2.0);
\node at (9.5,-0.75)  [shape = rectangle, draw]{};
\draw (9.5,-0.75) node[anchor=east]{$\vbh x^3$};
\draw [red, ->, thick] (9.5,-0.75)-- (8.55, 1.0);

\foreach \x in {0,1,2,3}
\foreach \y in {0, 1, 2, 3, 4}
\node at (3.5 +2*\x, -1.0 -0.25 * \y )[fill=red, inner sep=1pt, shape=circle, draw]{};

\draw [->, thick] (4.5,1.75)-- (4.5, 0.5);
\node at (4.5,1.75)  [shape = rectangle, draw]{};
\draw (4.5, 1.75) node[anchor=east]{$\vbh^2$};

\draw [->, thick] (6.5,1.75)-- (6.5, 0.5);
\node at (6.5,1.75)  [shape = rectangle, draw]{};
\draw (6.5,1.75) node[anchor=east]{$\vbh^2x$};
\draw [red, ->, thick] (6.5,1.75)-- (5.6, 3.45);

\draw [->, thick] (8.5,1.75)-- (8.5, 0.5);
\node at (8.5,1.75)  [shape = rectangle, draw]{};
\draw (8.5,1.75) node[anchor=east]{$\vbh^2x^2$};

\foreach \x in {0,1,2}
\foreach \y in {0, 1, 2, 3, 4}
\node at (4.5 +2*\x, 1.5 -0.25 * \y )[fill=red, inner sep=1pt,
shape=circle, draw]{};

\draw [->, thick] (5.5,4.25)-- (5.5, 3.0);
\node at (5.5,4.25)  [shape = rectangle, draw]{};
\draw (5.5, 4.25) node[anchor=east]{$\vbh^3$};

\draw [->, thick] (7.5,4.25)-- (7.5, 3.0);
\node at (7.5,4.25)  [shape = rectangle, draw]{};
\draw (7.5,4.25) node[anchor=east]{$\vbh^3x$};

\draw [->, thick] (9.5,4.25)-- (9.5, 3.0);
\node at (9.5,4.25)  [shape = rectangle, draw]{};
\draw (9.5,4.25) node[anchor=east]{$\vbh^3x^2$};

\foreach \x in {0,1,2}
\foreach \y in {0, 1, 2, 3, 4}
\node at (5.5 +2*\x, 4.0 -0.25 * \y )[fill=red, inner sep=1pt, shape=circle, draw]{};

\node at (0.5,-3.25)  [shape = rectangle, draw]{};
\draw (0.5,-3.25) node[anchor=north]{$x^{-1}$};
\node at (-1.5,-3.25)  [shape = rectangle, draw]{};
\draw (-1.5,-3.25) node[anchor=north]{$x^{-2}$};
\node at (-3.5,-3.25)  [shape = rectangle, draw]{};
\draw (-3.5,-3.25) node[anchor=north]{$x^{-3}$};

\draw [->, thick] (-0.5, -3.25)--(-0.5,-2.0);
\draw [red, ->, thick] (-0.5,-2.5)-- (-1.45, -0.9);
\draw [->, thick] (-2.5, -3.25)--(-2.5,-2.0);
\draw [->, thick] (-4.5, -3.25)--(-4.5,-2.0);
\draw [red, ->, thick] (-4.5,-2.5)-- (-5.45, -0.75);

\foreach \x in {0,1,2}
\foreach \y in {0, 1, 2, 3, 4 }
\node at (-0.5-2*\x, -3.25 +0.25 * \y )[fill=red, inner sep=1pt, shape=circle, draw]{};

\node at (1.5,-0.75)  [shape = rectangle, draw]{};
\draw (1.5,-0.75) node[anchor=north]{$\vbh x^{-1}$};
\node at (-0.5,-0.75)  [shape = rectangle, draw]{};
\draw (-0.5,-0.75) node[anchor=north]{$\vbh x^{-2}$};
\node at (-2.5,-0.75)  [shape = rectangle, draw]{};
\node at (-4.5,-0.75)  [shape = rectangle, draw]{};

\draw [->, thick] (0.5, -0.75)--(0.5,0.5);
\node at (0.5,-0.75) [fill=red, inner sep=1pt, shape=circle, draw]{};
\draw [red, ->, thick] (0.5,-0.0)-- (-0.45, 1.8);
\draw [->, thick] (-1.5, -0.75)--(-1.5,0.5);
\node at (-1.5,-0.75) [fill=red, inner sep=1pt, shape=circle, draw]{};
\draw [->, thick] (-3.5, -0.75)--(-3.5,0.5);
\node at (-3.5,-0.75) [fill=red, inner sep=1pt, shape=circle, draw]{};
\draw [red, ->, thick] (-3.5,-0.0)-- (-4.45, 1.8);

\foreach \x in {0,1,2}
\foreach \y in {0, 1, 2, 3, 4 }
\node at (0.5-2*\x, -0.75 +0.25 * \y )[fill=red, inner sep=1pt, shape=circle, draw]{};2
\node at (2.5,1.75)  [shape = rectangle, draw]{};
\draw (2.5,1.75) node[anchor=north]{$\vbh^2 x^{-1}$};
\node at (0.5,1.75)  [shape = rectangle, draw]{};
\node at (-1.5,1.75)  [shape = rectangle, draw]{};
\node at (-3.5,1.75)  [shape = rectangle, draw]{};

\draw [->, thick]  (1.5, 1.75)--(1.5,3.0);
\node at (1.5,1.75) [fill=red, inner sep=1pt, shape=circle, draw]{};
\draw [red, ->, thick] (1.5, 2.5)--(0.55,4.2);
\draw [->, thick]  (-0.5, 1.75)--(-0.5,3.0);
\node at (-0.5,1.75) [fill=red, inner sep=1pt, shape=circle, draw]{};
\draw [->, thick] (-2.5, 1.75)--(-2.5,3.0);
\node at (-2.5,1.75) [fill=red, inner sep=1pt, shape=circle, draw]{};
\draw [red, ->, thick] (-2.5, 2.5)--(-3.55,4.2);
\draw [->, thick] (-4.5, 1.75)--(-4.5,3.0);
\node at (-4.5,1.75) [fill=red, inner sep=1pt, shape=circle, draw]{};

\foreach \x in {0,1,2, 3}
\foreach \y in {0, 1, 2, 3, 4 }
\node at (1.5-2*\x, 1.75 +0.25 * \y )[fill=red, inner sep=1pt,
shape=circle, draw]{};

\node at (3.5,4.25)  [shape = rectangle, draw]{};
\draw (3.5,4.25) node[anchor=north]{$\vbh^3 x^{-1}$};
\node at (1.5,4.25)  [shape = rectangle, draw]{};
\node at (-0.5,4.25)  [shape = rectangle, draw]{};
\node at (-2.5,4.25)  [shape = rectangle, draw]{};
\node at (-4.5,4.25)  [shape = rectangle, draw]{};

\draw [->, thick]  (2.5, 4.25)--(2.5,5.5);
\node at (2.5,4.25) [fill=red, inner sep=1pt, shape=circle, draw]{};
\draw [->, thick]  (0.5, 4.25)--(0.5,5.5);
\draw [->, thick] (-1.5, 4.25)--(-1.5,5.5);
\node at (-1.5,4.25) [fill=red, inner sep=1pt, shape=circle, draw]{};
\draw [->, thick] (-3.5, 4.25)--(-3.5,5.5);

\foreach \x in {0,1,2, 3}
\foreach \y in {0, 1, 2, 3, 4 }
\node at (2.5-2*\x, 4.25 +0.25 * \y )[fill=red, inner sep=1pt, shape=circle, draw]{};
\end{tikzpicture}
\caption{ The $\vb$-Bockstein
   spectral sequence: $E_1=H\Zu_{\bigstar}^Q[\vbh]\Rightarrow
   \kR_{\bigstar}^Q.$ Squares are copies of $\Z$, small (red) dots are
   copies of $\Ftwo$. The $d^1$ differentials are indicated by
   displaying the top or bottom degree component between the relevant
   boxes. The larger (blue) blobs select out the degrees occurring in
   one slice spectral sequence.}
\end{figure}

\newpage
\begin{figure}
\begin{tikzpicture}
[scale =1.2]
\clip (-5, -5.5) rectangle (10, 6);
\draw[step=0.25, gray, very thin] (-9,-9) grid (15, 9);

\draw[blue, ->, line width=0.7mm](-10,-4.5)--(10, -4.5);
\draw (9.2, -4.5) node[anchor=north]{$n$};
\draw[blue, thick](-10,-2)--(10, -2);
\draw[blue, thick](-10,0.5)--(10, 0.5);
\draw[blue, thick](-10,3.0)--(10, 3.0);
\draw[blue, thick](-10,5.5)--(10, 5.5);
\draw (-3.5,-4.5) node[anchor=north]{$-6$};
\draw (-1.5,-4.5) node[anchor=north]{$-4$};
\draw (0.5,-4.5) node[anchor=north]{$-2$};
\draw (2.5,-4.5) node[anchor=north]{$0$};
\draw (4.5,-4.5) node[anchor=north]{$2$};
\draw (6.5,-4.5) node[anchor=north]{$4$};
\draw (8.5,-4.5) node[anchor=north]{$6$};

\draw[blue, ->, line width=0.7mm](2,-4.5)--(2, 6);
\draw (2,5.5) node[anchor=south east]{$s$};
\draw[blue,  thick](-5,-4.5)--(-5, 6);
\draw[blue,  thick](-4,-4.5)--(-4, 6);
\draw[blue,  thick](-3,-4.5)--(-3, 6);
\draw[blue,  thick](-2,-4.5)--(-2, 6);
\draw[blue,  thick](-1,-4.5)--(-1, 6);
\draw[blue,  thick](0,-4.5)--(0, 6);
\draw[blue,  thick](1,-4.5)--(1, 6);
\draw[blue,  thick](3,-4.5)--(3, 6);
\draw[blue,  thick](4,-4.5)--(4, 6);
\draw[blue,  thick](5,-4.5)--(5, 6);
\draw[blue,  thick](6,-4.5)--(6, 6);
\draw[blue,  thick](7,-4.5)--(7, 6);
\draw[blue,  thick](8,-4.5)--(8, 6);
\draw[blue,  thick](9,-4.5)--(9, 6);

\foreach \y in {-1}
\node at (10.5 +\y, 2.75 + 2.25 * \y ) [circle, draw=blue!50,
fill=blue!20] {}; 

\foreach \y in {-2, -1, 0, 1}
\node at (8.5 +\y, 2.25 + 2.25 * \y ) [circle, draw=blue!50,
fill=blue!20] {}; 

\foreach \y in {-2, -1, 0, 1}
\node at (6.5 +\y, 1.75 + 2.25 * \y ) [circle, draw=blue!50,
fill=blue!20] {}; 

\foreach \y in {0, 1, 2 ,3}
\node at (2.5 +\y, -3.25 + 2.25* \y ) [circle, draw=blue!50,fill=blue!20] {}; 

\foreach \y in {0, 1, 2,3}
\node at (0.5 +\y, -3.75 + 2.25 * \y ) [circle, draw=blue!50, fill=blue!20] {}; 




\draw [->, thick] (2.5,-3.25)-- (2.5, -4.5);
\node at (2.5,-3.25)  [shape = rectangle, draw]{};
\draw (2.5,-3.25) node[anchor=east]{$1$};

\node at (4.5,-3.25)  [shape = circle, draw]{};
\draw (4.5,-3.25) node[anchor=east]{$2x$};

\draw [->, thick] (6.5,-3.25)-- (6.5, -4.5);
\node at (6.5,-3.25)  [shape = rectangle, draw]{};
\draw (6.5,-3.25) node[anchor=east]{$x^2$};

\node at (8.5,-3.25)  [shape = circle, draw]{};
\draw (8.5,-3.25) node[anchor=east]{$2x^3$};

\foreach \x in {0,2}
\foreach \y in {0, 1, 2, 3, 4}
\node at (2.5 +2*\x, -3.5 -0.25 * \y )[fill=red, inner sep=1pt, shape=circle, draw]{};

\draw [->, thick] (3.5,-0.75)-- (3.5, -1.25);
\node at (3.5,-0.75)  [shape = rectangle, draw]{};
\draw (3.5, -0.75) node[anchor=east]{$\vbh$};

\node at (5.5,-0.75)  [shape = circle, draw]{};
\draw (5.5,-0.75) node[anchor=east]{$2\vbh x$};

\draw [->, thick] (7.5,-0.75)-- (7.5, -1.25);
\node at (7.5,-0.75)  [shape = rectangle, draw]{};
\draw (7.5,-0.75) node[anchor=east]{$\vbh x^2$};

\node at (9.5,-0.75)  [shape = circle, draw]{};
\draw (9.5,-0.75) node[anchor=east]{$2\vbh x^3$};

\foreach \x in {0,2}
\foreach \y in {0, 1}
\node at (3.5 +2*\x, -1.0 -0.25 * \y )[fill=red, inner sep=1pt, shape=circle, draw]{};

\draw [->, thick] (4.5,1.75)-- (4.5, 1.25);
\node at (4.5,1.75)  [shape = rectangle, draw]{};
\draw (4.5, 1.75) node[anchor=east]{$\vbh^2$};

\node at (6.5,1.75)  [shape = circle, draw]{};
\draw (6.5,1.75) node[anchor=east]{$2\vbh^2x$};

\draw [->, thick] (8.5,1.75)-- (8.5, 1.25);
\node at (8.5,1.75)  [shape = rectangle, draw]{};
\draw (8.5,1.75) node[anchor=east]{$\vbh^2x^2$};

\foreach \x in {0,2}
\foreach \y in {0, 1}
\node at (4.5 +2*\x, 1.5 -0.25 * \y )[fill=red, inner sep=1pt,
shape=circle, draw]{};

\draw [->, thick] (5.5,4.25)-- (5.5, 3.75);
\node at (5.5,4.25)  [shape = rectangle, draw]{};
\draw (5.5, 4.25) node[anchor=east]{$\vbh^3$};

\node at (7.5,4.25)  [shape = circle, draw]{};
\draw (7.5,4.25) node[anchor=east]{$2\vbh^3x$};

\draw [->, thick] (9.5,4.25)-- (9.5, 3.75);
\node at (9.5,4.25)  [shape = rectangle, draw]{};
\draw (9.5,4.25) node[anchor=east]{$\vbh^3x^2$};

\foreach \x in {0,2}
\foreach \y in {0, 1}
\node at (5.5 +2*\x, 4.0 -0.25 * \y )[fill=red, inner sep=1pt, shape=circle, draw]{};

\node at (0.5,-3.25)  [shape = rectangle, draw]{};
\draw (0.5,-3.25) node[anchor=north]{$x^{-1}$};
\node at (-1.5,-3.25)  [shape = rectangle, draw]{};
\draw (-1.5,-3.25) node[anchor=north]{$x^{-2}$};
\node at (-3.5,-3.25)  [shape = rectangle, draw]{};
\draw (-3.5,-3.25) node[anchor=north]{$x^{-3}$};

\draw [->, thick] (-0.5, -3.25)--(-0.5,-2.75);
\draw [->, thick] (-2.5, -3.25)--(-2.5,-2.0);
\draw [->, thick] (-4.5, -3.25)--(-4.5,-2.75);

\foreach \x in {0,2}
\foreach \y in {0, 1, 2 }
\node at (-0.5-2*\x, -3.25 +0.25 * \y )[fill=red, inner sep=1pt,
shape=circle, draw]{};

\foreach \y in {0, 1, 2, 3, 4, 5 }
\node at (-0.5-2, -3.25 +0.25 * \y )[fill=red, inner sep=1pt, shape=circle, draw]{};

\node at (1.5,-0.75)  [shape = rectangle, draw]{};
\draw (1.5,-0.75) node[anchor=north]{$\vbh x^{-1}$};
\node at (-0.5,-0.75)  [shape = rectangle, draw]{};
\draw (-0.5,-0.75) node[anchor=north]{$\vbh x^{-2}$};
\node at (-2.5,-0.75)  [shape = rectangle, draw]{};
\node at (-4.5,-0.75)  [shape = rectangle, draw]{};

\draw [->, thick] (0.5, -0.75)--(0.5,-0.25);
\node at (0.5,-0.75) [fill=red, inner sep=1pt, shape=circle, draw]{};
\draw [->, thick] (-3.5, -0.75)--(-3.5,-0.25);
\node at (-3.5,-0.75) [fill=red, inner sep=1pt, shape=circle, draw]{};

\foreach \x in {0,2}
\foreach \y in {0, 1, 2}
\node at (0.5-2*\x, -0.75 +0.25 * \y )[fill=red, inner sep=1pt,
shape=circle, draw]{};

\node at (2.5,1.75)  [shape = rectangle, draw]{};
\draw (2.5,1.75) node[anchor=north]{$\vbh^2 x^{-1}$};
\node at (0.5,1.75)  [shape = rectangle, draw]{};
\node at (-1.5,1.75)  [shape = rectangle, draw]{};
\node at (-3.5,1.75)  [shape = rectangle, draw]{};

\draw [->, thick]  (1.5, 1.75)--(1.5,2.25);
\node at (1.5,1.75) [fill=red, inner sep=1pt, shape=circle, draw]{};
\draw [->, thick] (-2.5, 1.75)--(-2.5,2.25);
\node at (-2.5,1.75) [fill=red, inner sep=1pt, shape=circle, draw]{};

\foreach \x in {0,2}
\foreach \y in {0, 1, 2 }
\node at (1.5-2*\x, 1.75 +0.25 * \y )[fill=red, inner sep=1pt,
shape=circle, draw]{};

\node at (3.5,4.25)  [shape = rectangle, draw]{};
\draw (3.5,4.25) node[anchor=north]{$\vbh^3 x^{-1}$};
\node at (1.5,4.25)  [shape = rectangle, draw]{};
\node at (-0.5,4.25)  [shape = rectangle, draw]{};
\node at (-2.5,4.25)  [shape = rectangle, draw]{};
\node at (-4.5,4.25)  [shape = rectangle, draw]{};

\draw [->, thick]  (2.5, 4.25)--(2.5,4.75);
\node at (2.5,4.25) [fill=red, inner sep=1pt, shape=circle, draw]{};
\draw [->, thick] (-1.5, 4.25)--(-1.5,4.75);

\foreach \x in {0,2}
\foreach \y in {0, 1, 2}
\node at (2.5-2*\x, 4.25 +0.25 * \y )[fill=red, inner sep=1pt, shape=circle, draw]{};
\end{tikzpicture}

\caption{$E_2=E_{\infty}$ of the $\vb$-Bockstein
   spectral sequence: $E_1= H\Zu_{\bigstar}^Q[\vbh]\Rightarrow
   \kR_{\bigstar}^Q. $ Squares are copies of $\Z$. Large circles are
   copies of $\Z$ of index 2 in $E_1$.  Small (red) dots are
   copies of $\Ftwo$. The larger (blue) blobs select out the degrees occurring in
   one slice spectral sequence. The Gap occurs in this picture most
   prominently as the three zeroes below the diagonal squares emanating
 from $(n,s)=(-2,0)$, but it also requires three zeroes along
 the diagonal emanating from $(n, 0)$ for any $n\leq -1$. }
\end{figure}



\subsection{The slice spectral sequence (SSS)}
\label{subsec:slice}
The SSS is usually presented as a mechanism for calculating the integer
graded part $\pi^Q_*(X)$, with $RO(Q)$-graded part obtained by 
looking at suspensions of $X$ \cite{HillSlice}.  Since suspension by $\rho$ just shifts
the slices, we may combine the suspensions and view it as a method for 
calculating the $RO(Q)$-graded homotopy. We will not give a detailed
account of the SSS here since in this special case it is contained in
the BSS. This short section is designed
so that  those already familiar with the SSS can pick it out from the
BSS as described in the previous subsection.  

For $\kR$, the slice filtration is precisely the same as
the $\vb$-Bockstein filtration, so that the $RO(Q)$-graded BSS and SSS
are the same. The one piece of added value in the SSS is the way that
the usual ($\Z$-graded) display highlights vanishing lines and `The
Gap'.  This information is present in the BSS displays above, but it
is not so easy to spot. 

Just to be completely explicit, for $\kR$ the negative slices are all
zero and the odd slices are all zero. The positive even slices are
suspensions of $H\Zu$: in the notation of \cite{HHR}
$P^{2k}_{2k}k\R=\Sigma^{k\rho} H\Zu$ for $k\geq 0$.  In  Adams grading
$(n,s)=(2k-s, s)$ the usual SSS has
$$\pi^Q_{2k-s}(P^{2k}_{2k}k\R)
=H_{2k-s}^Q(S^{k\rho};\Zu) =H_{k-s}^Q(S^{k\sigma};\Zu)
=H\Zu^Q_{k-s-k\sigma} , $$
so that the homotopy groups of one particular slice occur along a line of
slope $-1$. The appropriate chart is displayed clearly as Figure 7 in
\cite[Page 414]{HHRCfour}. 

Note that the homotopy group above, at coordinates $(x_S,y_S)=(2k-s,s)$ in the
SSS, appears at coordinates $(x_B,y_B)=(2k-s ,k)$ in the BSS.
Thus $(x_B,y_B)=(x_S,(x_S+y_S)/2)$ and $(x_S,y_S)=(x_B, 2y_B-x_B)$.  
The BSS differential $d^1_B$ corresponds to the SSS differential
$d^3_S$. The slice spectral  sequence simply selects a subspectral sequence given by 
one particular $\sigma$-grading in each box.

The larger (pale blue) blobs in the two   $\vb$-{\bf B}SS charts are the
potentially nonzero
entries in the {\bf S}SS for $\pi^Q_*(\kR)$ (i.e., the integer graded
part).  We have displayed five diagonal rows of blobs (in adding
$(1,1)$ to $(n,s)$, $\sigma$ is subtracted). The diagonal
emanating from $(n,s)=(-2,0)$ does not contain any non-zero SSS entries. 
The diagonal emanating from $(n,s)=(0,0)$ contains $1$, and $a^k\vbh^k$  
(corresponding to $\eta^k$) for $k\geq 1$. Of course $1, a\vbh$ and $a^2\vbh^2$
survive to $E^{\infty}$ to give $\eta$ and $\eta^2$ and higher powers are the images of
differentials.
The first non-zero entry on the diagonal emanating from $(n,s)=(2,0)$ is on
the  $s=2$ line, where it is $\vbh^2x$, and after that $a^k\vbh^{2+k}x$
for $k\geq 1$. These
all support nonzero differentials, so only $2\vbh^2x$ survives (to
give the generator of $\pi^Q_4(\kR)=\pi_4(ko)$). In our display, only one blob is
visible on diagonal emanating from $(n,s)=(4,0)$, and it is zero.

\end{document}